\newcommand{\sn}{\mathrm{sn}\,}
\newcommand{\cn}{\mathrm{cn}\,}
\newcommand{\dn}{\mathrm{dn}\,}
\newcommand{\Zz}{\mathbb{Z}}
\newcommand{\tW}{\mathcal{W}}
\theoremstyle{plain}
\newtheorem{thm}{Theorem}
\newtheorem{lem}[thm]{Lemma}
\newtheorem{cor}[thm]{Corollary}
\theoremstyle{definition}
\newtheorem{rmk}[thm]{Remark}
\begin{document}
\title{The generalized Zwegers' $\mu$-function and transformation formulas for the bilateral basic hypergeometric series}
\author{Genki Shibukawa and Satoshi Tsuchimi}
\date{\empty}
\maketitle

\begin{abstract}
By applying Slater's transformation formulas for the bilateral basic hypergeometric series ${}_2\psi_{2}$, we derive three translation or connection formulas for the generalized Zwegers' $\mu$-function (``continuous $q$-Hermite function'') which was introduced by Shibukawa--Tsuchimi (SIGMA, 2023). 
From some Bailey's transformation formula of ${}_2\psi_{2}$, we also give a formula for the expression of the generalized Zwegers' $\mu$-function by a certain degenerate Very-Well-Poised bilateral basic hypergeometric series ${}_4\psi_{8}$. 
As an application of this new expression formula for the generalized Zwegers' $\mu$-function, we obtain some new $q$-expansions for elliptic functions and Ramanujan's mock theta functions.
\end{abstract}

\section{Introduction}
We denote the set of integers and the set of complex numbers by $\mathbb{Z}$ and $\mathbb{C}$ respectively. 
Let $i:=\sqrt{-1}$ be the imaginary unit, $\tau $ be a complex number $\mathrm{Im}(\tau)>0$, and $q:={\rm e}^{2\pi i\tau}$. 
For non-negative integers $r$ and $s$, we define the basic hypergeometric series ${}_r\phi_{s}$ and the bilateral basic hypergeometric series ${}_r\psi_{s}$ as follows:
\begin{align*}
{}_r\phi_{s}\left( \begin{matrix} a_1,\dots,a_r \\ b_1,\dots,b_s \end{matrix};q,x\right)
 &:=
 \sum_{n=0}^{\infty }\frac{(a_1,\dots, a_r)_n}{(b_1,\dots,b_s,q)_n}\left((-1)^{n }q^\frac{n(n-1)}{2}\right)^{s-r+1}x^{n }, \\
{}_r\psi_{s}\left( \begin{matrix} a_1,\dots, a_r\\b_1,\dots, b_s\end{matrix};q,x\right)
 &:=
 \sum_{n \in \mathbb{Z}}\frac{(a_1,\dots, a_r)_n}{(b_1,\dots, b_s)_n}\left((-1)^{n }q^\frac{n(n-1)}{2}\right)^{s-r}x^{n },
\end{align*}
where $a_{1},\ldots,a_{r},b_{1},\ldots,b_{s},x$ are appropriate complex numbers and 
\begin{align*}
(a_1,\dots,a_r)_{n }
   &=
   (a_1,\dots,a_r;q)_{n }
   :=
   (a_1;q)_{n}\cdots (a_r;q)_{n}, \qquad n \in \mathbb{Z}\cup \{\infty \}, \\
(a)_{n}
   &=
   (a;q)_{n}
   :=
   \frac{(a;q)_{\infty }}{(q^{n}a;q)_{\infty }}, \qquad n \in \mathbb{Z}, \\
(a)_{\infty }
   &=
   (a;q)_{\infty }
   :=
   \prod_{j=0}^{\infty }(1-aq^{j}).
\end{align*}

Zwegers \cite{Zw} introduced the following function which we call {\it{Zwegers' $\mu $-function}}:
\begin{align*}
\mu(u,v;\tau)
   &:=
   \frac{e^{\pi iu}}{\vartheta_{11}(v)}
   \frac{1}{1-e^{2\pi iu}}
   {}_1\psi_2\left(\begin{matrix}e^{2\pi iu}/q \\ 0,e^{2\pi iu} \end{matrix};q,qe^{2\pi iv}\right) \\
   &=
   \frac{e^{\pi iu}}{\vartheta_{11}(v)}\sum_{n \in \mathbb{Z}}\frac{(-1)^{n}e^{2\pi inv}q^{\frac{n(n+1)}{2}}}{1-e^{2\pi iu}q^n}, \qquad u,v \in \mathbb{C}\setminus \mathbb{Z}+\mathbb{Z}\tau ,
\end{align*}
where $\vartheta_{11}$ is the theta function defined by 
\begin{align*}
\vartheta_{11}(u)
   &:=
   \sum_{n \in \mathbb{Z}}e^{2\pi i(n+\frac{1}{2})(u+\frac{1}{2})+\pi i(n+\frac{1}{2})^{2}\tau}
   =-iq^{\frac{1}{8}}e^{-\pi iu}(q,e^{2\pi iu}, qe^{-2\pi iu};q )_{\infty }.
\end{align*}

Zwegers' $\mu$-function is a special function including many mock theta functions as its specializations. 
At least all mock theta functions discovered by Ramanujan are known to be obtained as linear combinations of some specializations for the $\mu$-functions and appropriate $q$-infinite products \cite[Appendix~A]{BFOR}. 
Zwegers showed that, by adding an appropriate non-holomorphic function, the $\mu$-function has properties of real analytic bivariate Jacobi (like) forms. 
These modular properties and his key idea of the non-holomorphic deformation for the $\mu$-function greatly influenced later investigations on mock theta functions and mock modular forms. 

On the other hand, the authors introduced a one parameter deformation of Zwegers' $\mu $-function, which we call the {\it{generalized Zwegers' $\mu $-function}}, as the following series: 
\begin{align}
\label{eq:def of gen mu 1}
\mu(u,v;\alpha,\tau)
   =
   \mu(u,v;\alpha)
   :=&
   \frac{e^{\pi i\alpha (u-v)}}{\vartheta_{11}(v)}
   \sum_{n \in \mathbb{Z}}(-1)^{n}e^{2\pi i(n+\frac{1}{2})v}q^{\frac{n(n+1)}{2}}
   \frac{\big( e^{2\pi iu}q^{n+1}\big)_{\infty }}{\big( e^{2\pi iu}q^{n-\alpha +1}\big)_{\infty }} \\
\label{eq:def of gen mu 2}
   =&
   -iq^{-\frac{1}{8}}\frac{(x/y)^\frac{\alpha}{2}}{(q)_{\infty }\theta(y)}\frac{(x)_\infty}{(x/a)_\infty}
   {}_1\psi_2\left(\begin{matrix}x/a \\ 0,x \end{matrix};q,y\right),
\end{align}
where 
$$
x:=e^{2\pi iu}, \quad y:=e^{2\pi iv}, \quad a:=q^{\alpha }=e^{2\pi i\alpha \tau }, \quad \theta (y):=(y,q/y)_{\infty }.
$$
For convenience, (\ref{eq:def of gen mu 1}) is often also written in the multiplicative notation as $\mu(x,y;a) = \mu(u,v;\alpha)$.

The generalized $\mu $-function is derived from the image of the composition of the $q$-Borel transformation $\mathcal{B}^{+}$ and $q$-Laplace transformation $\mathcal{L}^{+}$ of the formal solution at $x=0$:
$$
x^{\frac{\alpha }{2}}\widetilde{f}_{0}(x;a), \quad 
   \widetilde{f}_{0}(x;a)
   :=
   {}_2\phi_0\left(\begin{matrix}a,0 \\ - \end{matrix};q, {\frac{x}{a}}\right)
$$
of the following second-order linear $q$-difference equation of the Laplace type ($q$-Hermite-Weber equation):
\begin{align}\label{eq:q-Hermite--Weber}
[T_x^2-(1-xq)\sqrt{a}T_x-xq]f(x)=0.
\end{align}
Precisely, the following fact holds (for details, see \cite[Theorem~1.2]{ST}):
\begin{align}\label{eq:mu and q-Hermite--Weber}
f_{0}\big(e^{2\pi i(u-v)},-e^{2\pi iu};e^{2\pi i\tau \alpha }\big)
   =
   \mu(u,v;\alpha),
\end{align}
where 
$$
f_{0}(x,\lambda ;a)
   :=
    x^{\frac{\alpha }{2}}
    \mathcal{L}^{+}\circ\mathcal{B}^{+}\big(\widetilde{f}_{0}\big)(x,\lambda).
$$

Since the case of $\alpha =1$ is the original Zwegers' $\mu$-function:
$$
f_{0}\big(e^{2\pi i(u-v)},-e^{2\pi iu};e^{2\pi i\tau }\big)
   =
   \mu(u,v),
$$
Zweger's $\mu$-function (for this case, Garoufalidis--Wheeler \cite{GW} pointed out this fact independently) and the generalized $\mu$-function are characterized as one of the fundamental solutions which comes from the formal solution $x^{\frac{\alpha }{2}}\widetilde{f}_{0}(x;a)$ at $x=0$ for the $q$-Hermite--Weber ($q$-HW) equation.

Shibukawa--Tsuchimi gave fundamental formulas for the generalized $\mu$-function similar to the original $\mu$-function \cite[Theorem~1.3]{ST}, and also pointed out the generalized $\mu$-function satisfies the following recursion for the parameter $a = e^{2\pi i \alpha \tau}$ \cite[Theorem~1.3 (1.31)]{ST}:
$$
2\cos{\pi (u-v)}\mu(u,v;\alpha )
   =
   (1 - q^{- \alpha })\mu(u,v;\alpha + 1) + \mu(u,v;\alpha - 1),
$$
which coincides essentially with the $q$-Bessel equation. 
As a corollary, they proved if $\alpha $ is a non-positive integer $-k$ then the generalized $\mu$-function equals to the continuous $q$-Hermite polynomial \cite[Theorem~1.6]{ST}:
\begin{align}\label{eq:mu and CqH}
\mu(u,v;-k)
   =
   -iq^{-\frac{1}{8}}H_{k}(\cos{\pi (u-v)}\mid q),
\end{align}
where $H_{k}$ is the continuous $q$-Hermite polynomial of degree $k$:
$$
H_{k}(\cos{\pi (u-v)} \mid q)
   :=
   \sum_{l=0}^{k}
   \frac{(q)_{k}}{(q)_{l}(q)_{k-l}}
   e^{\pi i(k-2l)(u-v)}.
$$

Therefore, the generalized $\mu$-function $\mu(u,v;\alpha)$ is also regarded as a ``continuous $q$-Hermite function'' which is a continuous deformation of the continuous $q$-Hermite polynomial with the degree parameter $k$, and the original $\mu$-function is regarded as the continuous $q$-Hermite polynomial of ``degree $-1$''.

The continuous $q$-Hermite polynomials are a typical example of $q$-orthogonal polynomials, and have some interesting properties \cite[Chapter~13.1]{I}. 
Thus the generalized $\mu$-function which includes the original Zwegers' $\mu$-function as a special case is important in mock theta functions and mock modular forms, but is also a fundamental object in special functions.

However, Shibukawa--Tsuchimi needed some results in \cite{Zh} or non-publish results in \cite{Oh2} to prove the translation formula \cite[Theorem~1.3 (1.27)]{ST}:
\begin{align}
\mu(u+z,v+z;\alpha)
   &=
   \Phi(u,v;\alpha)\mu(v,u;\alpha) \nonumber \\
   & \quad -
   \frac{i(q^{\alpha })_{\infty }(q)_{\infty}^{2}q^{\frac{1-4\alpha}{8}}\vartheta_{11}(z)\vartheta_{11}(u+v+z-\alpha \tau)}{\vartheta_{11}(u)\vartheta_{11}(v-\alpha \tau)\vartheta_{11}(u+z-\alpha \tau)\vartheta_{11}(v+z)} \nonumber \\
\label{eq:general mu trans 1}
   & \quad \cdot e^{\pi i(\alpha-1)(u-v)}\, {}_1\phi_{1}\left(\begin{matrix}q^{1-\alpha} \\ 0 \end{matrix};q, e^{-2\pi i(u-v)}q\right), \\
\Phi(u,v;\alpha)
   &:=
   \frac{\vartheta_{11}(v-\alpha\tau)\vartheta_{11}(u)}{\vartheta_{11}(u-\alpha\tau)\vartheta_{11}(v)}e^{2\pi i\alpha(u-v)}, 
\end{align}
or its variation \cite[Corollary~3.1 (3.3)]{ST}:
\begin{align}
\label{eq:general mu trans 2}
iq^\frac{1}{8}\mu(u,v;\alpha)
   &=
    \Phi(u,v;\alpha)j(u-v;\alpha)+j(v-u;\alpha), \\
j(w;\alpha)
   &:=
   iq^{\frac{1}{8}}
   \frac{(q)_{\infty}}{(q^{1-\alpha})_{\infty}}
   \frac{e^{\pi i(1-\alpha)w}}{\vartheta_{11}(w)}
   {}_1\phi_{1}\left(\begin{matrix}q^{1-\alpha} \\ 0 \end{matrix} ; q,e^{2\pi iw}q \right). \nonumber 
\end{align}
On the other hand, the generalized $\mu$-function has the expression formula by the bilateral series ${}_2\psi_{2}$ (\ref{eq:mu 2psi2}). 
Since the bilateral series ${}_2\psi_{2}$ is a bilateral analogue of Heine's $q$-hypergeometric function ${}_2\phi_{1}$, it has some nice properties. 
First, the bilateral basic hypergeometric function ${}_2\psi_{2}$ is the simplest case of the bilateral series ${}_r\psi_{r}$ for which the non-trivial Slater's transformation formula (\ref{eq:Slater A r}) holds. 
Second, Bailey's transformations (\ref{eq:Bailey trans0}), (\ref{eq:Bailey trans1}), (\ref{eq:Bailey trans2}), and (\ref{eq:Bailey trans3}) hold, which is a bilateral version of the following Heine's transformation:
\begin{gather*}
{_{2}\phi _1}\left(\begin{matrix}a,b \\ c \end{matrix};q,x\right)
 =
 \frac{(ax,c/a)_{\infty}}{(x,c)_{\infty}}\,
 {_{2}\phi _1}\left(\begin{matrix}a,abx/c \\ ax \end{matrix};q,\frac{c}{a}\right).
\end{gather*}

In this paper, by applying Slater's transformation formulas (\ref{eq:Slater A 2 1}), (\ref{eq:Slater A 2 2}) and (\ref{eq:Slater A 2 3}), we give unified proofs of the following three formulas. 
\begin{thm}
\label{thm:main theorem1}
For the free parameters $x^{\prime}$ and $y^{\prime}$, we have 
\begin{align}
\mu{(x,y;a)}
   &=
   x^{\prime }\frac{\theta (xyy^{\prime }/q^{2}a)\theta (yx^{\prime })\theta (xx^{\prime }/a)\theta (q/y^{\prime })}{\theta \left(xyx^{\prime }y^{\prime }/q^{2}a\right)\theta (y)\theta (x/a)\theta (y^{\prime }/x^{\prime })}
   \mu{(xx^{\prime },yx^{\prime };a)} \nonumber \\
\label{eq:mu trans 1}   
   & \quad +
   y^{\prime }\frac{\theta (xyx^{\prime }/q^{2}a)\theta (xy^{\prime })\theta (yy^{\prime }/a)\theta (q/x^{\prime })}{\theta \left(xyx^{\prime }y^{\prime }/q^{2}a\right)\theta (y)\theta (x/a)\theta (x^{\prime }/y^{\prime })}
   \mu{(xy^{\prime },yy^{\prime };a)} \\
   &=
   x^{\prime }\frac{\theta (x/q^{2})\theta (yx^{\prime })\theta (xx^{\prime }/a)\theta (qy/a)}{\theta \left(xx^{\prime }/q^{2}\right)\theta (y)\theta (x/a)\theta (a/yx^{\prime })}
   \mu{(xx^{\prime },yx^{\prime };a)} \nonumber \\
\label{eq:mu trans 2}
   & \quad -iq^{-\frac{1}{8}}\left(\frac{x}{y}\right)^{\frac{\alpha }{2}}
   \frac{a}{y}\frac{\theta (xyx^{\prime }/q^{2}a)\theta (q/x^{\prime })(a,qy/x)_{\infty }}{\theta \left(xx^{\prime }/q^{2}\right)\theta (y)\theta (x/a)\theta (yx^{\prime }/a)} 
   {_{0}\phi _1}\left(\begin{matrix} - \\ qy/x \end{matrix};q,\frac{q^{2}y}{ax}\right) \\
   &=
   -iq^{-\frac{1}{8}}\left(\frac{x}{y}\right)^{\frac{\alpha }{2}}
   \frac{a}{x}\frac{\theta (x/q^{2})\theta (qy/a)(a,qx/y)_{\infty }}{\theta \left(a/q^{2}\right)\theta (y)\theta (x/a)\theta (x/y)}
   {_{0}\phi _1}\left(\begin{matrix} - \\ qx/y \end{matrix};q,\frac{q^{2}x}{ay}\right) \nonumber \\
\label{eq:mu trans 3}
   & \quad -iq^{-\frac{1}{8}}\left(\frac{x}{y}\right)^{\frac{\alpha }{2}}
   \frac{a}{y}\frac{\theta (y/q^{2})\theta (qx/a)(a,qy/x)_{\infty }}{\theta \left(a/q^{2}\right)\theta (y)\theta (x/a)\theta (y/x)}
   {_{0}\phi _1}\left(\begin{matrix} - \\ qy/x \end{matrix};q,\frac{q^{2}y}{ax}\right).
\end{align}
\end{thm}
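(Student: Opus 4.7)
The plan is to start from the ${}_2\psi_2$ representation of the generalized $\mu$-function (equation (\ref{eq:mu 2psi2})), and to obtain each of the three identities by applying one of Slater's three transformation formulas (\ref{eq:Slater A 2 1}), (\ref{eq:Slater A 2 2}), (\ref{eq:Slater A 2 3}) for ${}_2\psi_2$, with parameters chosen so that each transformed ${}_2\psi_2$ on the right-hand side is recognizable, up to a scalar prefactor, either as another instance of the $\mu$-function or as a degenerate bilateral series that collapses to a ${}_0\phi_1$.

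For (\ref{eq:mu trans 1}), I would apply Slater's three-term transformation (\ref{eq:Slater A 2 1}), which writes a ${}_2\psi_2$ as a linear combination of two ${}_2\psi_2$'s with upper/lower parameters scaled by one of two free multipliers. Taking these multipliers to be $x'$ and $y'$ and matching the two resulting ${}_2\psi_2$'s against the series for $\mu(xx',yx';a)$ and $\mu(xy',yy';a)$ via (\ref{eq:mu 2psi2}), the identity reduces to showing that the two coefficient ratios, originally given as ratios of infinite $q$-Pochhammer symbols, equal the stated ratios of theta functions. This follows from $\theta(z)=(z,q/z)_\infty$ together with the quasi-periodicities $\theta(qz)=-z^{-1}\theta(z)$ and $\theta(z)=-z\theta(q/z)$. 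For (\ref{eq:mu trans 2}) and (\ref{eq:mu trans 3}) one applies (\ref{eq:Slater A 2 2}) and (\ref{eq:Slater A 2 3}) instead: these send the ${}_2\psi_2$ associated to $\mu(x,y;a)$ to series in which an upper parameter cancels against a lower one (up to a factor of $q$), so that the ${}_2\psi_2$ collapses to a ${}_0\phi_1$. In (\ref{eq:mu trans 2}) only one of the two terms degenerates while the other is identified with $\mu(xx',yx';a)$; in (\ref{eq:mu trans 3}) both terms collapse to ${}_0\phi_1$'s with bases $qx/y$ and $qy/x$. The common normalization $-iq^{-1/8}(x/y)^{\alpha/2}/((q)_\infty\theta(y))$ coming from (\ref{eq:mu 2psi2}) is carried through each of the three computations.

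The main obstacle is not conceptual but purely the bookkeeping of the parameter specializations and the rewriting of the $q$-Pochhammer prefactors in Slater's formulas as the specific theta-function ratios that appear in the statement. For each of the three Slater transformations one has to pin down which substitution simultaneously realizes the input ${}_2\psi_2$ as the series for $\mu(x,y;a)$ and forces the output ${}_2\psi_2$'s either to be genuine $\mu$-series or to trigger the collapse to ${}_0\phi_1$; once that is fixed, repeated application of $\theta$-quasi-periodicity brings the coefficients into the stated normal form and each of (\ref{eq:mu trans 1})--(\ref{eq:mu trans 3}) emerges as a direct restatement of the corresponding Slater transformation on the ${}_2\psi_2$ expression of $\mu$.
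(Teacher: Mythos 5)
Your proposal is correct and follows essentially the same route as the paper: specializing Slater's three ${}_2\psi_2$ transformations (\ref{eq:Slater A 2 1})--(\ref{eq:Slater A 2 3}) (with $a_1\to x/a$, $a_2\to y/a$, $b_1,b_2\to 0$, argument $\to a$, and the $c$-parameters chosen as $q/x'$, $q/y'$, $qy/a$, $qx/a$ as appropriate) and identifying the resulting series via (\ref{eq:mu 2psi2}) either as $\mu$-functions or as the degenerate ${}_0\phi_1$ terms, with the remaining work being theta/Pochhammer bookkeeping. This matches the paper's proof, including which terms collapse in (\ref{eq:mu trans 2}) and (\ref{eq:mu trans 3}).
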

Theorem~\ref{thm:main theorem1} is regarded as some connection formulas for certain fundamental solutions of the $q$-HW equation or $q$-Bessel equation. 
The first formula (\ref{eq:mu trans 1}) is a connection formula between $\mu{(x,y;a)}$ which is a fundamental solution of the $q$-HW equation, and other two independent solutions $\mu{(xx^{\prime },yx^{\prime };a)}$ and $\mu{(xy^{\prime },yy^{\prime };a)}$. 
Similarly, the second formula (\ref{eq:mu trans 2}) is a connection formula between $\mu{(x,y;a)}$ and two fundamental solutions $\mu{(xx^{\prime },yx^{\prime };a)}$ and ${_{0}\phi _1}$ of the $q$-HW equation. 
Since $\mu{(x,y;a)}$ satisfies the $q$-Bessel equation for the parameter $a$, there is also a connection formula between the generalized $\mu$-function and two fundamental solutions of the $q$-Bessel equation, which is the third formula (\ref{eq:mu trans 3}).

We remark that (\ref{eq:mu trans 2}) and (\ref{eq:mu trans 3}) are equivalent to (\ref{eq:general mu trans 1}) and (\ref{eq:general mu trans 2}) respectively, and that our method is a direct proof of (\ref{eq:general mu trans 1}) and (\ref{eq:general mu trans 2}), unlike the proof in \cite{ST}, without using the $q$-Borel and $q$-Laplace transformations.

Further, by Bailey's transformation formula (\ref{eq:Slater BC 8 2psi2 2}), we also obtain the following expression formula for the generalized $\mu$-function.
\begin{thm}
\label{thm:main theorem2}
We have
\begin{align}
\mu(x,y;a)
   &=
   -iq^{-\frac{1}{8}}
   \left(\frac{x}{y}\right)^{\frac{\alpha}{2}}
   \frac{(x,y,aq/x,aq/y)_{\infty }}{(q)_{\infty }\theta(y)\theta(x/a)\theta(xy/aq)} \nonumber \\
\label{eq:generalized mu BC1}
   & \quad \cdot 
   \left(1-\frac{xy}{aq}\right){_{4}\psi _8}\left(\begin{matrix} \sqrt{\frac{qxy}{a}},-\sqrt{\frac{qxy}{a}},x/a,y/a \\ \sqrt{\frac{xy}{aq}},-\sqrt{\frac{xy}{aq}},y,x,0,0,0,0 \end{matrix};q,\frac{x^{2}y^{2}}{aq}\right) \\   
   &=
   -iq^{-\frac{1}{8}}
   \left(\frac{x}{y}\right)^{\frac{\alpha}{2}}
   \frac{(x,y,aq/x,aq/y)_{\infty }}{(q)_{\infty }\theta(y)\theta(x/a)\theta(xy/aq)} \nonumber \\
\label{eq:generalized mu BC2}
   & \quad \cdot 
   \sum_{n \in \mathbb{Z}}
      \left(1-\frac{xy}{a}q^{2n-1}\right)
      \frac{(x/a,y/a)_n}{(x,y)_n}q^{2n^{2}-3n}\left(\frac{x^{2}y^{2}}{a}\right)^{n}.
\end{align}
If the case of $a=q$, we have
\begin{align}
\mu(x,y)
   &=
   iq^{-\frac{1}{8}}
   \frac{\sqrt{xy}}{\left(1-\frac{x}{q}\right)(q-y)}
   \frac{1}{(q)_{\infty }\theta(xy/q^{2})} \nonumber \\
\label{eq:mu BC1}
   & \quad \cdot 
   \left(1-\frac{xy}{q^{2}}\right){_{4}\psi _8}\left(\begin{matrix} \sqrt{xy},-\sqrt{xy},x/q,y/q \\ \frac{\sqrt{xy}}{q},-\frac{\sqrt{xy}}{q},y,x,0,0,0,0 \end{matrix};q,\frac{x^{2}y^{2}}{q^{2}}\right) \\   
\label{eq:mu BC2}
   &=
   iq^{-\frac{1}{8}}\frac{\sqrt{xy}}{(q)_{\infty }\theta(xy)}
   \sum_{n \in \mathbb{Z}}\frac{1-xyq^{2n}}{(1-xq^n)(1-yq^n)}x^{2n}y^{2n}q^{2n^{2}} \\
\label{eq:mu BC3}
   &=
   iq^{-\frac{1}{8}}\frac{\sqrt{xy}}{(q)_{\infty }\theta(xy)}
   \left\{\sum_{n \in \mathbb{Z}}\frac{x^{2n}y^{2n}q^{2n^{2}}}{(1-xq^n)(1-yq^n)}
   -\sum_{n \in \mathbb{Z}}\frac{x^{-2n}y^{-2n}q^{2n^{2}}}{(1-x^{-1}q^n)(1-y^{-1}q^n)}\right\}.
\end{align}
\end{thm}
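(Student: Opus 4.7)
The plan is to apply Bailey's ${}_2\psi_2$-to-${}_4\psi_8$ transformation (\ref{eq:Slater BC 8 2psi2 2}) directly to the ${}_2\psi_2$ expression (\ref{eq:mu 2psi2}) of the generalized $\mu$-function. With the right choice of parameters---namely base $A=xy/(aq)$ producing the Very-Well-Poised block $\pm\sqrt{qxy/a},\pm\sqrt{xy/aq}$, upper parameters $x/a,y/a$, and lower parameters $x,y$---the right-hand side of Bailey's formula becomes exactly the ${}_4\psi_8$ displayed in (\ref{eq:generalized mu BC1}), once the infinite-product prefactor is rearranged into $(x,y,aq/x,aq/y)_\infty/[(q)_\infty\theta(y)\theta(x/a)\theta(xy/aq)]$ via theta quasi-periodicity.

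For (\ref{eq:generalized mu BC2}) I would expand the ${}_4\psi_8$ explicitly. The Very-Well-Poised simplification
$$\frac{(q\sqrt{A},-q\sqrt{A};q)_n}{(\sqrt{A},-\sqrt{A};q)_n}=\frac{1-Aq^{2n}}{1-A}$$
with $A=xy/(aq)$ cancels the prefactor $(1-xy/(aq))$ and leaves $1-(xy/a)q^{2n-1}$ in the summand; the four zero parameters in the lower row contribute trivially since $(0;q)_n=1$ for all $n\in\Zz$, and collecting the quadratic weight via $q^{2n(n-1)}\cdot(x^{2}y^{2}/(aq))^{n}=q^{2n^{2}-3n}(x^{2}y^{2}/a)^{n}$ yields the stated bilateral sum.

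For the Zwegers case $a=q$, (\ref{eq:mu BC1}) follows by direct substitution: the cancellations $(q^{2}/x)_{\infty}/(q/x)_{\infty}=x/(x-q)$ and its $y$-analogue collapse the infinite-product prefactor to $\sqrt{xy}/[(1-x/q)(q-y)(q)_{\infty}\theta(xy/q^{2})]$. Then (\ref{eq:mu BC2}) is obtained by an index shift $n\mapsto n+1$ combined with the quasi-periodicity $\theta(z/q)=-(z/q)\theta(z)$ applied twice to give $\theta(xy/q^{2})=(xy)^{2}\theta(xy)/q^{3}$, which produces the compact prefactor $iq^{-1/8}\sqrt{xy}/[(q)_{\infty}\theta(xy)]$. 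Finally, (\ref{eq:mu BC3}) follows by distributing the factor $1-xyq^{2n}$ in (\ref{eq:mu BC2}) and applying the substitution $n\mapsto -n$ to the $xyq^{2n}$ contribution; the identity $(1-xq^{-n})(1-yq^{-n})=xyq^{-2n}(1-x^{-1}q^{n})(1-y^{-1}q^{n})$ makes the two pieces line up at the common quadratic weight $q^{2n^{2}}$.

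The main obstacle will be the bookkeeping in applying Bailey's transformation: (\ref{eq:Slater BC 8 2psi2 2}) produces a rather long prefactor of infinite products and theta quotients, and pinning down the exact parameter substitution that makes this prefactor collapse to $(x,y,aq/x,aq/y)_{\infty}/[(q)_{\infty}\theta(y)\theta(x/a)\theta(xy/aq)]$ requires careful handling of the half-integer square-root shifts $\pm\sqrt{qxy/a},\pm\sqrt{xy/aq}$ together with the quasi-periodicities $\theta(qz)=-z^{-1}\theta(z)$ and $\theta(z/q)=-(z/q)\theta(z)$. Once the ${}_4\psi_8$ form (\ref{eq:generalized mu BC1}) is established, the remaining steps---VWP expansion, $a=q$ specialization, and the $n\mapsto -n$ rewrite---are mechanical.
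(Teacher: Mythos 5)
Your proposal is correct and follows essentially the same route as the paper: the paper also applies Bailey's transformation (\ref{eq:Slater BC 8 2psi2 2}) to the ${}_2\psi_2$ expression (\ref{eq:mu 2psi2}) — via the substitution $a\to x/a$, $b\to y/a$, $c,d\to 0$, $x\to a$, which is exactly your degenerate ${}_4\psi_8$ with base $xy/(aq)$ — and then treats the prefactor simplification, the $a=q$ specialization, the index shift, and the $n\mapsto -n$ split as immediate. The only cosmetic remark is that collapsing the prefactor needs nothing beyond $(z)_\infty=(1-z)(qz)_\infty$ and the quasi-periodicity of $\theta$, as you indicate, so your argument is complete in the same sense as the paper's.
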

As a corollary, we have some new $q$-expansion formulas for elliptic functions (Corollary~\ref{cor: mu elliptic functions}) and Ramanujan's mock theta functions (Appendix~A).

The contents of this article are as follows. 
In Section~2, we state some basic properties of the basic hypergeometric functions ${}_r\phi_{s}$ and ${}_r\psi_{s}$. 
Especially, we introduce certain Slater's and Bailey's transformation formulas and the expression formulas of the generalized $\mu$-function by the bilateral basic hypergeometric series, which are necessary for the proofs of our main results. 
In Section~3, we prove Theorem~\ref{thm:main theorem1} and Theorem~\ref{thm:main theorem2}, and give some corollaries and remarks of our main results. 
Finally, in Appendix, we list all mock theta functions discovered by Ramanujan, and give their curious expressions following from the formulas (\ref{eq:mu BC1}) and (\ref{eq:mu BC2}).

\section{Some transformation formulas for the series ${}_2\psi_{2}$ and bilateral basic hypergeometric expressions of $\mu(x,y;a)$}
Refer to \cite[Chapter~3]{AB} and \cite[Chapter~5]{GR} for the details in this section. 
Since $|q|<1$ ($\mathrm{Im}(\tau)>0$), if $s<r$ then the bilateral series ${}_r\psi_{s}$ diverges for all $x \in \mathbb{C}$ and the series ${}_{r+1}\phi_{s}$ diverges for all $x\not=0 \in \mathbb{C}$.  
If $r<s$, the series ${}_r\psi_{s}$ converges when $\left|\frac{b_{1}\cdots b_{s}}{a_{1}\cdots a_{r}}\right| < |x|$ and the series ${}_{r+1}\phi_{s}$ converges for any complex number $x$. 
If $r=s$, the bilateral series ${}_r\psi_{r}$ converges when 
\begin{align}
\label{eq:conv cond r psi r}
\left|\frac{b_{1}\cdots b_{s}}{a_{1}\cdots a_{r}}\right|<|x|<1
\end{align}
and the series ${}_{r+1}\phi_{r}$ converges when $|x| < 1$. 
Throughout this paper, we always assume (\ref{eq:conv cond r psi r}).

By the definition, we have
\begin{align}
\label{eq:psi inversion r}
{_{r}\psi _r}\left(\begin{matrix} a_{1},\ldots, a_{r} \\ b_{1},\ldots, b_{r} \end{matrix};q,x\right)
   &=
   {_{r}\psi _r}\left(\begin{matrix} q/b_{1},\ldots, q/b_{r} \\ q/a_{1},\ldots, q/a_{r} \end{matrix};q,\frac{b_{1}\cdots b_{r}}{a_{1}\cdots a_{r}x}\right).
\end{align}
Particular, 
\begin{align}
\label{eq:psi inversion 2}
{_{2}\psi _2}\left(\begin{matrix} a_{1},a_{2} \\ b_{1},b_{2} \end{matrix};q,x\right)
   &=
   {_{2}\psi _2}\left(\begin{matrix} q/b_{1},q/b_{2} \\ q/a_{1},q/a_{2} \end{matrix};q,\frac{b_{1}b_{2}}{a_{1}a_{2}x}\right).
\end{align}

We mention two kinds of Slater's transformation formulas (1952). 
\begin{lem}[{\cite[Chapter~5 (5.4.3), (5.5.2)]{GR}}, {\cite[(4)]{S}}]
\label{lem:Slater trans formulas}
For generic complex parameters $a_{1},\ldots,a_{r}$, $b_{1},\ldots,b_{r}$, $c_{1},\ldots,c_{r}$, we have
\begin{align}
& \theta (dxq)\prod_{j=1}^{r}\frac{(b_{j},q/a_{j})_{\infty }}{\theta (c_{j})}
   {}_r\psi_{r}\left( \begin{matrix} a_1,\dots, a_r \\ b_1,\ldots, b_r\end{matrix};q,x\right) \nonumber \\
\label{eq:Slater A r}
   &=
   \sum_{m=1}^{r}\frac{\theta (c_{m}dx)}{\theta (c_{m})}(c_{m}/a_{m})_{\infty }
   \prod_{1\leq j\not=m \leq r}\frac{(c_{m}/a_{j},b_{j}q/c_{m})_{\infty }}{\theta (c_{m}/c_{j})}
   {}_r\psi_{r}\left( \begin{matrix} a_{1}q/c_{m},\ldots, a_{r}q/c_{m} \\ b_{1}q/c_{m},\dots, b_{r}q/c_{m}\end{matrix};q,x\right),
\end{align}
where
$$
d:=\frac{a_{1}\cdots a_{r}}{c_{1}\cdots c_{r}}.
$$
Similarly, for generic complex parameters $a, a_{1},\ldots,a_{r}, b_{1},\ldots,b_{2r+2}$, we have
\begin{align}
& \frac{1-a}{\theta (a)}
      \frac{\prod_{1\leq j\leq 2r+2}(q/b_{j},aq/b_{j})_{\infty }}{\prod_{1\leq k\leq r}\theta (a_{k})\theta (a_{k}/a)}
      {}_{2r+4}\psi_{2r+4}\left( \begin{matrix} \sqrt{a}q,-\sqrt{a}q,b_{1},\ldots, b_{2r+2} \\ \sqrt{a},-\sqrt{a},aq/b_{1},\ldots, aq/b_{2r+2}\end{matrix};q,\frac{a^{r-1}q^{r-2}}{b_{1}\cdots b_{2r+2}}\right) \nonumber \\
   &=
   \sum_{m=1}^{r}
      \frac{1-a_{m}^{2}/a}{\theta (a_{m})\theta (a_{m}/a)\theta (a_{m}^{2}/a)}
      \frac{\prod_{1\leq j\leq 2r+2}(a_{m}q/b_{j},aq/a_{m}b_{j})_{\infty }}{\prod_{1\leq k\not= m \leq r}\theta (a_{k}/a_{m})\theta (a_{k}a_{m}/a)} \nonumber \\
\label{eq:Slater BC r}
   & \quad \cdot 
   {}_{2r+4}\psi_{2r+4}\left( \begin{matrix} a_{m}q/\sqrt{a},-a_{m}q/\sqrt{a},a_{m}b_{1}/a,\ldots, a_{m}b_{2r+2}/a \\ a_{m}/\sqrt{a},-a_{m}/\sqrt{a},a_{m}q/b_{1},\ldots, a_{m}q/b_{2r+2}\end{matrix};q,\frac{a^{r-1}q^{r-2}}{b_{1}\cdots b_{2r+2}}\right).
\end{align}
\end{lem}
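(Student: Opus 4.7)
The plan is to derive both formulas by the classical Mellin--Barnes contour integration method of Slater \cite{S}, as streamlined in \cite[Chapter~5]{GR}. For (\ref{eq:Slater A r}), I would start by representing the ${}_{r}\psi_{r}$ on the left-hand side as a Barnes-type integral
$$
\frac{1}{2\pi i}\int_{C}\prod_{j=1}^{r}\frac{(b_{j}q^{s};q)_{\infty}}{(a_{j}q^{s};q)_{\infty}}\cdot\frac{-\pi(-x)^{s}}{\sin(\pi s)}\,ds,
$$
where the contour $C$ is chosen to enclose the poles $s=-n$, $n\in\mathbb{Z}$, coming from $\sin(\pi s)$; the residue sum over this set reproduces the bilateral series on the left. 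I would then deform $C$ so as to capture instead the $r$ sequences of poles of the factors $1/(a_{j}q^{s};q)_{\infty}$, located at $a_{j}q^{s}=q^{1-k}$ with $k\ge 1$. Summing residues along each sequence collapses to a shifted ${}_{r}\psi_{r}$, and relabeling the shift by the prescribed value $q/c_{m}$ matches the $m$-th term on the right-hand side of (\ref{eq:Slater A r}).

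The core technical step is to identify the residue coefficients with the theta-function ratios in the statement. Each residue is a priori a product of $(q;q)_{\infty}$-type symbols together with a factor $(-x)^{s_{\ast}}$ evaluated at the pole. I would use the Jacobi triple product identity to repackage the $x$-dependent factor as $\theta(c_{m}dx)/\theta(c_{m})$, while the $x$-independent products $(c_{m}/a_{j},\,b_{j}q/c_{m})_{\infty}$ and the denominators $\theta(c_{m}/c_{j})$ arise by grouping pairs of $(\,\cdot\,;q)_{\infty}$ factors that differ by the shift $q/c_{m}$. The prefactor $\theta(dxq)$ on the left encodes the overall $x^{s}$-dependence of the integrand. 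The main obstacle is the bookkeeping: keeping careful track of signs and $q$-power phase factors so that the resulting ratios genuinely reassemble into Jacobi theta quotients instead of into mismatched $q$-Pochhammer products. As a consistency check I would verify the base case $r=1$ against Ramanujan's ${}_{1}\psi_{1}$ summation, which should make the shape of the theta coefficients transparent and pin down the overall normalization.

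For the very-well-poised identity (\ref{eq:Slater BC r}), I would run the same contour-shift argument, now with $2r+4$ upper and lower parameters subject to the VWP constraint $b_{j}\cdot aq/b_{j}=aq$ and the distinguished quadruple $\pm\sqrt{a}\,q,\pm\sqrt{a}$. The VWP balance forces the residues from the $b_{j}$-poles and from their partners $aq/b_{j}$ to pair off, producing the characteristic prefactor $(1-a_{m}^{2}/a)/\theta(a_{m})\theta(a_{m}/a)\theta(a_{m}^{2}/a)$ in the $m$-th summand. As an alternative route I would attempt to deduce (\ref{eq:Slater BC r}) from (\ref{eq:Slater A r}) via the $q$-analogue of the Bailey transform: substituting the Bailey pair associated with the VWP quadruple into the A-type formula and simplifying should reproduce the ${}_{2r+4}\psi_{2r+4}$ identity. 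The hard part in either approach is the same, namely controlling the combinatorics of the theta factors so that the VWP symmetry $b_{j}\leftrightarrow aq/b_{j}$ is visible on both sides; managing this symmetry is what singles the VWP case out from the general A-type result.
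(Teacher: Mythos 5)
First, note that the paper itself does not prove this lemma: it is quoted from \cite[(5.4.3), (5.5.2)]{GR} and \cite[(4)]{S}, and the authors' remark after the statement points to Ito--Noumi's elliptic Lagrange interpolation as the source of ``elegant proofs''. So your proposal has to stand on its own, and as sketched it has a structural gap that is more serious than the ``bookkeeping'' you flag. The right-hand side of (\ref{eq:Slater A r}) is an expansion indexed by $r$ \emph{free generic parameters} $c_{1},\ldots,c_{r}$, which do not occur in the ${}_r\psi_r$ on the left; likewise the $a_{1},\ldots,a_{r}$ in (\ref{eq:Slater BC r}) are free and absent from the left-hand series. Your integrand is built solely from $a_{j},b_{j},x$, so it contains no trace of the $c_{m}$, and no contour deformation of a $c$-independent integral can produce a $c_{m}$-dependent identity. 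Your plan implicitly identifies the $m$-th term of (\ref{eq:Slater A r}) with the residue contribution of the poles of $1/(a_{m}q^{s};q)_{\infty}$, i.e.\ it conflates the interpolation points $c_{m}$ with the series parameters $a_{m}$; what that residue computation would (formally) give is a different, degenerate statement in which the shifts are tied to the $a_{j}$, and moreover each pole family $a_{j}q^{s}=q^{1-k}$, $k\ge 1$, is one-sided, so the resulting sums are unilateral, whereas the series on the right of (\ref{eq:Slater A r}) are genuinely bilateral. To make a residue-type argument yield free $c_{m}$'s one must build them into the kernel from the start, via a theta-function (elliptic Lagrange interpolation) kernel with prescribed poles at the $c_{m}$-points --- which is precisely the Ito--Noumi mechanism the paper cites, and it is a substantially different construction from the one you describe.

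There are also analytic problems with the kernel itself. For a bilateral series you need poles at \emph{all} integers, so your contour must wrap the whole real axis; but the factor $\pi(-x)^{s}/\sin(\pi s)$ is not a function of $z=q^{s}$, and the poles of $1/(a_{j}q^{s};q)_{\infty}$ in the $s$-plane recur in towers with period $2\pi i/\log q$, so ``the $r$ sequences of poles'' is not an accurate description, and the legitimacy of the deformation (decay of the integrand at infinity, convergence of the doubly indexed residue sums) is exactly the point that needs an argument and is not addressed. Working single-valuedly in $z=q^{s}$ forces a theta-quotient kernel, which again lands you in the interpolation-kernel framework rather than the Barnes-type picture you sketch. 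Finally, for (\ref{eq:Slater BC r}) the proposal is only a gesture: the claimed pairing of residues from $b_{j}$ and $aq/b_{j}$ cannot by itself produce the prefactors $(1-a_{m}^{2}/a)/\theta(a_{m})\theta(a_{m}/a)\theta(a_{m}^{2}/a)$, because the $a_{m}$ are again free parameters invisible to your integrand. As it stands the proposal would not establish either formula; a correct self-contained proof should follow either Slater/Gasper--Rahman or the Ito--Noumi interpolation argument, with the expansion parameters incorporated into the kernel from the outset.
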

\begin{rmk}
{\rm{(1)}} From some specializations of the case of $r=1$ for (\ref{eq:Slater A r}) and (\ref{eq:Slater BC r}), we obtain the well-known Ramanujan's ${_{1}\psi _1}$ summation:
$$
{_{1}\psi _1}\left(\begin{matrix} a \\ b \end{matrix};q,x\right)
 =
 \frac{(ax,q/xz,q,b/a)_{\infty}}{(x,b/ax,b,q/a)_{\infty}}
$$
and Bailey's ${_{6}\psi _6}$ summation:
\begin{align}
& {_{6}\psi _6}\left(\begin{matrix} \sqrt{a}q,-\sqrt{a}q,b,c,d,e \\ \sqrt{a},-\sqrt{a},aq/b,aq/c,aq/d,aq/e \end{matrix};q,\frac{qa^{2}}{bcde}\right) \nonumber \\
\label{eq:Bailey's summation}
 & \quad =
 \frac{(aq,aq/bc,aq/bd,aq/be,aq/cd,aq/ce,aq/de,q,q/a)_{\infty}}{(aq/b,aq/c,aq/d,aq/e,q/b,q/c,q/d,q/e,qa^{2}/bcde)_{\infty}}
\end{align}
respectively.\\
{\rm{(2)}} More generally, Slater's transformation formulas (\ref{eq:Slater A r}) and (\ref{eq:Slater BC r}) correspond to the simplest case of the connection formulas for the Jackson integral of type $A$ \cite[Theorem~1.2]{IN2} and type $BC$ \cite[Theorem~1.5]{IN1} respectively. 
We remark that Ito--Noumi's proofs of their connection formulas for some Jackson integrals by an elliptic analogue of Lagrange interpolation functions include elegant proofs of Slater's transformation formulas (\ref{eq:Slater A r}) and (\ref{eq:Slater BC r}).
\end{rmk}

In this paper, the case of $r=2$ is important. 
We need the following three variations on (\ref{eq:Slater A r}) for $r=2$ to prove the formulas (\ref{eq:mu trans 1}), (\ref{eq:mu trans 2}), and (\ref{eq:mu trans 3}). 
\begin{lem}
\label{lem:Slater 2psi2 lem}
For generic complex parameters $a_{1},a_{2}$, $b_{1},b_{2}$, $c_{1},c_{2}$, we obtain 
\begin{align}
& {_{2}\psi _2}\left(\begin{matrix} a_{1},a_{2} \\ b_{1},b_{2} \end{matrix};q,x\right) \nonumber \\
   &=
   \frac{q}{c_{1}}\frac{\theta (a_{1}a_{2}x/qc_{2})\theta (c_{2})}{\theta \left(a_{1}a_{2}x/c_{1}c_{2}\right)\theta (c_{1}/c_{2})}\frac{(c_{1}/a_{1},c_{1}/a_{2},qb_{2}/c_{1})_{\infty }}{(q/a_{1},q/a_{2},b_{1},b_{2})_{\infty }}
   {_{2}\psi _2}\left(\begin{matrix} qa_{1}/c_{1},qa_{2}/c_{1} \\ qb_{1}/c_{1},qb_{2}/c_{1} \end{matrix};q,x\right) \nonumber \\
\label{eq:Slater A 2 1}
   & \quad +
   \frac{q}{c_{2}}\frac{\theta (a_{1}a_{2}x/qc_{1})\theta (c_{1})}{\theta \left(a_{1}a_{2}x/c_{1}c_{2}\right)\theta (c_{2}/c_{1})}\frac{(c_{2}/a_{1},c_{2}/a_{2},qb_{1}/c_{2})_{\infty }}{(q/a_{1},q/a_{2},b_{1},b_{2})_{\infty }}
   {_{2}\psi _2}\left(\begin{matrix} qa_{1}/c_{2},qa_{2}/c_{2} \\ qb_{1}/c_{2},qb_{2}/c_{2} \end{matrix};q,x\right) \\
   &=
   \frac{q}{c_{1}}\frac{\theta (a_{1}a_{2}x/qc_{2})\theta (c_{2})}{\theta \left(a_{1}a_{2}x/c_{1}c_{2}\right)\theta (c_{1}/c_{2})}\frac{(c_{1}/a_{1},c_{1}/a_{2},qb_{2}/c_{1})_{\infty }}{(q/a_{1},q/a_{2},b_{1},b_{2})_{\infty }}
   {_{2}\psi _2}\left(\begin{matrix} qa_{1}/c_{1},qa_{2}/c_{1} \\ qb_{1}/c_{1},qb_{2}/c_{1} \end{matrix};q,x\right) \nonumber \\
\label{eq:Slater A 2 2}
   & \quad +
   \frac{q}{c_{2}}\frac{\theta (a_{1}a_{2}x/qc_{1})\theta (c_{1})}{\theta \left(a_{1}a_{2}x/c_{1}c_{2}\right)\theta (c_{2}/c_{1})}\frac{(c_{2}/a_{1},c_{2}/a_{2},qb_{1}/c_{2})_{\infty }}{(q/a_{1},q/a_{2},b_{1},b_{2})_{\infty }}
   {_{2}\psi _2}\left(\begin{matrix} c_{2}/b_{1},c_{2}/b_{2} \\ c_{2}/a_{1},c_{2}/a_{2} \end{matrix};q,\frac{b_{1}b_{2}}{a_{1}a_{2}x}\right) \\
   &=
   \frac{q}{c_{1}}\frac{\theta (a_{1}a_{2}x/qc_{2})\theta (c_{2})}{\theta \left(a_{1}a_{2}x/c_{1}c_{2}\right)\theta (c_{1}/c_{2})}\frac{(c_{1}/a_{1},c_{1}/a_{2},qb_{2}/c_{1})_{\infty }}{(q/a_{1},q/a_{2},b_{1},b_{2})_{\infty }}
   {_{2}\psi _2}\left(\begin{matrix} c_{1}/b_{1},c_{1}/b_{2} \\ c_{1}/a_{1},c_{1}/a_{2} \end{matrix};q,\frac{b_{1}b_{2}}{a_{1}a_{2}x}\right) \nonumber \\
\label{eq:Slater A 2 3}
   & \quad +
   \frac{q}{c_{2}}\frac{\theta (a_{1}a_{2}x/qc_{1})\theta (c_{1})}{\theta \left(a_{1}a_{2}x/c_{1}c_{2}\right)\theta (c_{2}/c_{1})}\frac{(c_{2}/a_{1},c_{2}/a_{2},qb_{1}/c_{2})_{\infty }}{(q/a_{1},q/a_{2},b_{1},b_{2})_{\infty }}
   {_{2}\psi _2}\left(\begin{matrix} c_{2}/b_{1},c_{2}/b_{2} \\ c_{2}/a_{1},c_{2}/a_{2} \end{matrix};q,\frac{b_{1}b_{2}}{a_{1}a_{2}x}\right).
\end{align}
\end{lem}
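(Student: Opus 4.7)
The plan is that all three identities come from a single source, namely Slater's transformation (\ref{eq:Slater A r}) specialised at $r=2$, combined (for two of them) with the trivial inversion identity (\ref{eq:psi inversion 2}). So the work splits into a ``master'' calculation producing (\ref{eq:Slater A 2 1}) and two short postprocessing steps producing (\ref{eq:Slater A 2 2}) and (\ref{eq:Slater A 2 3}).

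For (\ref{eq:Slater A 2 1}) I would set $r=2$ in (\ref{eq:Slater A r}), so that $d=a_1a_2/(c_1c_2)$ and the right-hand side consists of two terms indexed by $m=1,2$. For $m=1$ the two theta values appearing are $\theta(c_1)$, $\theta(c_1/c_2)$ and $\theta(c_1 d x)=\theta(a_1a_2 x/c_2)$, while on the left the ``outer'' theta is $\theta(d x q)=\theta(a_1a_2 x q/c_1c_2)$. To bring these into the form displayed in the statement I would divide through by the coefficient of ${}_2\psi_2(a_1,a_2;b_1,b_2;q,x)$ and then apply the quasi-periodicity relation $\theta(q z)=-z^{-1}\theta(z)$ (equivalently $\theta(z)/\theta(qz)=-qz^{-1}\cdot$ nothing, just $-z^{-1}$ inverted) twice: once on $\theta(a_1 a_2 x/c_2)$, to turn it into $\theta(a_1a_2 x/(qc_2))$ at the cost of a factor $-q c_2/(a_1 a_2 x)$, and once on $\theta(a_1a_2 x q/c_1c_2)$, to turn the denominator into $\theta(a_1 a_2 x/c_1c_2)$ at the cost of a factor $-a_1a_2 x/(c_1c_2)$. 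Those two factors combine to $q/c_1$, which is precisely the prefactor appearing in the first term of (\ref{eq:Slater A 2 1}). The $m=2$ term is handled by the same calculation with $c_1$ and $c_2$ interchanged, and the two resulting lines together are exactly (\ref{eq:Slater A 2 1}).

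To obtain (\ref{eq:Slater A 2 2}) I would leave the first term of (\ref{eq:Slater A 2 1}) untouched and apply the inversion (\ref{eq:psi inversion 2}) to the ${}_2\psi_2$ in the second term, which has upper parameters $qa_1/c_2, qa_2/c_2$ and lower parameters $qb_1/c_2, qb_2/c_2$ and argument $x$; direct substitution gives upper parameters $c_2/b_1, c_2/b_2$, lower $c_2/a_1, c_2/a_2$, and argument $b_1 b_2/(a_1 a_2 x)$, matching the statement. For (\ref{eq:Slater A 2 3}) I would do the same operation on both ${}_2\psi_2$ series of (\ref{eq:Slater A 2 1}) simultaneously.

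I do not expect any real obstacle beyond careful bookkeeping of the theta-function factors in Step~2; the two applications of $\theta(qz)=-z^{-1}\theta(z)$ must be performed in the right order and their sign and power-of-$q$ contributions tracked, but once the prefactor $q/c_1$ (resp.\ $q/c_2$) drops out the remaining infinite-product ratios already match the lemma verbatim. The inversion step is purely mechanical.
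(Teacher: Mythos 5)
Your derivation is correct and matches the paper's intended route: the paper states this lemma precisely as ``variations on (\ref{eq:Slater A r}) for $r=2$'', having set up exactly the two tools you use, namely Slater's formula (\ref{eq:Slater A r}) (rearranged via the quasi-periodicity $\theta(qz)=-z^{-1}\theta(z)$, which indeed produces the prefactors $q/c_{1}$ and $q/c_{2}$) and the inversion (\ref{eq:psi inversion 2}) applied to one or both ${}_{2}\psi_{2}$ terms to pass from (\ref{eq:Slater A 2 1}) to (\ref{eq:Slater A 2 2}) and (\ref{eq:Slater A 2 3}). No gaps.
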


Next, we introduce Bailey's transformation for ${}_{2}\psi_{2}$ (1950). 
\begin{lem}[{\cite[(3.2.4)]{AB}}, {\cite[(3.2)]{B1}}, {\cite[p.~147, Exercise~5.11]{GR}}]
\label{lem:Bailey trans VWP}
We have
\begin{align}
{_{2}\psi _2}\left(\begin{matrix} e,f \\ aq/c,aq/d \end{matrix};q,\frac{aq}{ef}\right)
   &=
   \frac{(q/c,q/d,aq/e,aq/f)_{\infty }}{(aq,q/a,aq/cd,aq/ef)_{\infty }} \nonumber \\
\label{eq:Slater BC 8 2psi2}
   & \quad \cdot 
   {_{6}\psi _8}\left(\begin{matrix} \sqrt{a}q,-\sqrt{a}q,c,d,e,f \\ \sqrt{a},-\sqrt{a},aq/c,aq/d,aq/e,aq/f,0,0 \end{matrix};q,\frac{a^{3}q^{2}}{cdef}\right),
\end{align}
and
\begin{align}
{_{2}\psi _2}\left(\begin{matrix} a,b \\ c,d \end{matrix};q,x\right)
   &=
   \frac{(ax,bx,qc/abx,qd/abx)_{\infty }}{(x,abx,q^{2}/abx,cd/abx)_{\infty }} \nonumber \\
\label{eq:Slater BC 8 2psi2 2}
   & \quad \cdot 
   {_{6}\psi _8}\left(\begin{matrix} \sqrt{qabx},-\sqrt{qabx},abx/c,abx/d,a,b \\ \sqrt{abx/q},-\sqrt{abx/q},c,d,bx,ax,0,0 \end{matrix};q,\frac{cdx}{q}\right).
\end{align}
\end{lem}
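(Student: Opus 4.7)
Since the two identities (\ref{eq:Slater BC 8 2psi2}) and (\ref{eq:Slater BC 8 2psi2 2}) differ only by the relabeling $(a,b,c,d,x)\mapsto(e,f,aq/c,aq/d,aq/(ef))$---a direct check shows that under this substitution, both the prefactor and the parameters and argument of the ${}_{6}\psi_{8}$ in (\ref{eq:Slater BC 8 2psi2 2}) transform into those of (\ref{eq:Slater BC 8 2psi2}) (e.g.\ $a'x'=aq/f$, $b'x'=aq/e$, $a'b'x'=aq$, $q^{2}/(a'b'x')=q/a$, $c'd'x'/q=a^{3}q^{2}/(cdef)$, etc.)---it suffices to prove one of the two, and I focus on (\ref{eq:Slater BC 8 2psi2}).

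The plan is to reduce the right-hand side of (\ref{eq:Slater BC 8 2psi2}) to the left-hand side via the classical splitting-and-Watson procedure. First, using the Very-Well-Poised simplification
$$
\frac{(\sqrt{a}q,-\sqrt{a}q;q)_{n}}{(\sqrt{a},-\sqrt{a};q)_{n}}=\frac{1-aq^{2n}}{1-a},
$$
and the fact that $(0;q)_{n}=1$ for all $n\in\mathbb{Z}$, the ${}_{6}\psi_{8}$ on the right rewrites as the weighted bilateral sum
$$
\frac{1}{1-a}\sum_{n\in\mathbb{Z}}(1-aq^{2n})\frac{(c,d,e,f;q)_{n}}{(aq/c,aq/d,aq/e,aq/f;q)_{n}}\,q^{n(n-1)}\!\left(\frac{a^{3}q^{2}}{cdef}\right)^{n},
$$
where the $q^{n(n-1)}$ weight is exactly the convergence factor contributed by the two zero entries in the lower list of the ${}_{6}\psi_{8}$ (recall $s-r=2$ here). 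Next, I would split this sum at $n=0$, converting the negative tail into a nonnegative-index sum via the inversion $(a;q)_{-n}=(-q/a)^{n}q^{-\binom{n}{2}}/(q/a;q)_{n}$; each of the two resulting one-sided sums is a Very-Well-Poised ${}_{8}\phi_{7}$, to which Watson's ${}_{8}\phi_{7}\to{}_{4}\phi_{3}$ transformation applies. The two output ${}_{4}\phi_{3}$'s, multiplied by the $q$-Pochhammer prefactors emerging from the inversion and Watson steps, reassemble into a single bilateral ${}_{2}\psi_{2}$ matching the left-hand side of (\ref{eq:Slater BC 8 2psi2}) with the stated prefactor. An alternative route is via Bailey's transform lemma applied to the Bailey pair coming from his ${}_{6}\psi_{6}$ summation (\ref{eq:Bailey's summation}), which upgrades ${}_{6}\psi_{6}$ to ${}_{6}\psi_{8}$ while introducing the free ${}_{2}\psi_{2}$ factor.

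The principal obstacle is the tight bookkeeping of the many infinite $q$-products that accumulate during the splitting, inversion, and two Watson transformations---the nontrivial verification is that they all collapse cleanly into the prefactor $\frac{(q/c,q/d,aq/e,aq/f)_{\infty}}{(aq,q/a,aq/(cd),aq/(ef))_{\infty}}$. A secondary issue is justifying the termwise manipulations inside the convergence region specified by (\ref{eq:conv cond r psi r}), which is routine but must be checked. Detailed executions of this scheme are available in \cite[Chapter~3]{AB}, \cite[(3.2)]{B1}, and \cite[Exercise~5.11]{GR}, and can be adapted without modification.
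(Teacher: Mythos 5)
The paper never proves Lemma~\ref{lem:Bailey trans VWP}: both identities are imported as known results from the cited sources ([AB, (3.2.4)], [B1, (3.2)], [GR, Exercise~5.11]), so there is no internal argument to measure yours against, and deferring the core identity to those references puts you at exactly the paper's level of rigor. Your added observation that \eqref{eq:Slater BC 8 2psi2 2} turns into \eqref{eq:Slater BC 8 2psi2} under $(a,b,c,d,x)\mapsto(e,f,aq/c,aq/d,aq/(ef))$ is correct and worth having --- one checks $abx\mapsto aq$, $abx/c\mapsto c$, $abx/d\mapsto d$, $ax\mapsto aq/f$, $bx\mapsto aq/e$, $cdx/q\mapsto a^{3}q^{2}/(cdef)$, and all eight infinite products in the prefactor match --- and the substitution is invertible for generic parameters, so proving either identity does prove both. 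The one genuine imprecision is in your splitting-plus-Watson sketch: after separating the ${}_{6}\psi_{8}$ at $n=0$ and inverting the negative tail, the two unilateral pieces carry four very-well-poised pairs $(c,aq/c),\dots,(f,aq/f)$ \emph{together with} the weight $q^{n(n-1)}$ coming from the two zero denominator entries, i.e.\ they are doubly confluent degenerations with one pair too many to be literal ${}_{8}\phi_{7}$'s, so Watson's ${}_{8}\phi_{7}\to{}_{4}\phi_{3}$ transformation does not apply as stated and one would need an appropriate limiting form; the recollection of the two outputs into a single ${}_{2}\psi_{2}$ with the theta-free prefactor is then precisely the nontrivial step you flag but do not carry out. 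Your alternative route --- a series rearrangement against Bailey's ${}_{6}\psi_{6}$ summation \eqref{eq:Bailey's summation} --- is in fact the method of Bailey's original paper and is the one to execute if you want a self-contained proof.
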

\begin{rmk}
\label{rmk:BC to A}
For $r=2$, by (\ref{eq:Slater BC 8 2psi2 2}), Slater's transformation formula (\ref{eq:Slater A 2 1}) is derived from (\ref{eq:Slater BC r}) in $r=2$:
\begin{align}
& \frac{1-a}{\theta (a)}
      \frac{(q/b_{1},aq/b_{1}, \ldots ,q/b_{6},aq/b_{6})_{\infty }}{\theta (a_{1})\theta (a_{1}/a)\theta (a_{2})\theta (a_{2}/a)}
   {}_{8}\psi_{8}\left( \begin{matrix} \sqrt{a}q,-\sqrt{a}q,b_{1},\ldots ,b_{6} \\ \sqrt{a},-\sqrt{a},aq/b_{1},\ldots ,aq/b_{6}\end{matrix};q,\frac{a^{3}q^{2}}{b_{1}b_{2}b_{3}b_{4}b_{5}b_{6}}\right) \nonumber \\
   &=
   \left(1-\frac{a_{1}^{2}}{a}\right)
      \frac{(a_{1}q/b_{1},aq/a_{1}b_{1},\ldots ,a_{1}q/b_{6},aq/a_{1}b_{6})_{\infty }}{\theta (a_{1})\theta (a_{1}/a)\theta (a_{1}^{2}/a)\theta (a_{2}/a_{1})\theta (a_{2}a_{1}/a)} \nonumber \\
   & \quad \cdot 
   {}_{8}\psi_{8}\left( \begin{matrix} a_{1}q/\sqrt{a},-a_{1}q/\sqrt{a},a_{1}b_{1}/a,\ldots, a_{1}b_{6}/a \\ a_{1}/\sqrt{a},-a_{1}/\sqrt{a},a_{1}q/b_{1},\ldots, a_{1}q/b_{6}\end{matrix};q,\frac{a^{3}q^{2}}{b_{1}b_{2}b_{3}b_{4}b_{5}b_{6}}\right) \nonumber \\
   & \quad +
   \left(1-\frac{a_{2}^{2}}{a}\right)
      \frac{(a_{2}q/b_{1},aq/a_{2}b_{1},\ldots ,a_{2}q/b_{6},aq/a_{2}b_{6})_{\infty }}{\theta (a_{2})\theta (a_{2}/a)\theta (a_{2}^{2}/a)\theta (a_{1}/a_{2})\theta (a_{2}a_{1}/a)} \nonumber \\
\label{eq:Slater BC 4}
   & \quad \quad \cdot 
   {}_{8}\psi_{8}\left( \begin{matrix} a_{2}q/\sqrt{a},-a_{2}q/\sqrt{a},a_{2}b_{1}/a,\ldots, a_{2}b_{6}/a \\ a_{2}/\sqrt{a},-a_{2}/\sqrt{a},a_{2}q/b_{1},\ldots, a_{2}q/b_{6}\end{matrix};q,\frac{a^{3}q^{2}}{b_{1}b_{2}b_{3}b_{4}b_{5}b_{6}}\right).
\end{align}
In fact, by taking the limit 
\begin{align*}
a \to \frac{a_{1}a_{2}x}{q}, \quad
a_{1} \to \frac{a_{1}a_{2}x}{c_{1}}, \quad
a_{2} \to \frac{a_{1}a_{2}x}{c_{2}}, \quad
b_{1} \to a_{1}, \quad
b_{2} \to a_{2}, \quad
b_{3} \to \frac{a_{1}a_{2}x}{b_{1}}, \quad
b_{4} \to \frac{a_{1}a_{2}x}{b_{2}},
\end{align*}
and $b_{7}, b_{8} \to \infty$ in (\ref{eq:Slater BC 4}) and using (\ref{eq:Slater BC 8 2psi2 2}), we have Slater's transformation formula (\ref{eq:Slater A 2 1}). 
For any $r>2$, since no result analogous to (\ref{eq:Slater BC 8 2psi2 2}) is known, we could not derive (\ref{eq:Slater BC r}) to (\ref{eq:Slater A r}) at the present stage. 
\end{rmk}

Finally, we mention some expressions of the generalized $\mu$-function by the bilateral basic hypergeometric series ${}_1\psi_{2}$, ${}_2\psi_{2}$ and ${}_0\psi_{2}$. 
\begin{lem}[{[ST, Theorem 1.4]}]
The generalized $\mu$-function has the following expressions:
\begin{align}
\mu(x,y;a)
 &=
 -iq^{-\frac{1}{8}}\frac{(x/y)^\frac{\alpha}{2}}{(q)_{\infty }\theta(x/a)}\frac{(aq/y)_{\infty }}{(q/y)_{\infty }}{}_1\psi_{2}\left(\begin{matrix}y/a\\0,y\end{matrix};q,x\right) \nonumber \\
\label{eq:mu 2psi2}
 &=
 -iq^{-\frac{1}{8}}\left(\frac{x}{y}\right)^\frac{\alpha}{2}\frac{(a,aq/x,aq/y)_{\infty }}{(q)_{\infty }\theta(y)\theta(x/a)}{}_2\psi_{2}\left(\begin{matrix}x/a,y/a\\0,0\end{matrix};q,a\right) \\
 &=
 -iq^{-\frac{1}{8}}\left(\frac{x}{y}\right)^\frac{\alpha}{2}\frac{(a,x,y)_{\infty }}{(q)_{\infty }\theta(y)\theta(x/a)}{}_0\psi_{2}\left(\begin{matrix}-\\x,y\end{matrix};q,\frac{xy}{a}\right). \nonumber 
\end{align}
\end{lem}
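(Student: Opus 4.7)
The plan is to derive all three expressions from the defining series (\ref{eq:def of gen mu 2}) by means of the $q$-binomial theorem, Jacobi's triple product, and elementary index-shift identities for $q$-Pochhammer symbols and theta functions. The pivotal preliminary step is the simple identity
\[
\frac{(x;q)_{\infty}}{(x/a;q)_{\infty}}\cdot\frac{(x/a;q)_{n}}{(x;q)_{n}} \;=\; \frac{(xq^{n};q)_{\infty}}{(xq^{n}/a;q)_{\infty}}\qquad (n\in\mathbb{Z}),
\]
which absorbs the outer prefactor of (\ref{eq:def of gen mu 2}) into the bilateral sum, producing the rewriting $\mu = -iq^{-1/8}(x/y)^{\alpha/2}/[(q)_{\infty}\theta(y)]$ times $\sum_{n}(-1)^{n}q^{n(n-1)/2}y^{n}(xq^{n};q)_{\infty}/(xq^{n}/a;q)_{\infty}$. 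Each of the three claimed representations then corresponds to a different algebraic manipulation of the ratio $(xq^{n};q)_{\infty}/(xq^{n}/a;q)_{\infty}$ inside this sum.

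For the ${}_{2}\psi_{2}$ expression, I would expand the ratio by the $q$-binomial theorem as $\sum_{k\ge 0}(a;q)_{k}(xq^{n}/a)^{k}/(q;q)_{k}$, exchange the order of summation, and evaluate the resulting inner sum $\sum_{n}(-1)^{n}q^{n(n-1)/2}(yq^{k})^{n}$ by Jacobi's triple product as $(q,yq^{k},q^{1-k}/y;q)_{\infty}$. The elementary theta identity $(yq^{k},q^{1-k}/y;q)_{\infty} = (-y)^{-k}q^{-k(k-1)/2}\theta(y)$ cancels $\theta(y)$ in the denominator, and after a reindexing of the remaining sum to a bilateral one (using the convention $(0;q)_{n}=1$ for all $n\in\mathbb{Z}$) together with $\theta(x/a)=(x/a,aq/x;q)_{\infty}$, the right-hand side becomes the stated ${}_{2}\psi_{2}$ representation.

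For the ${}_{0}\psi_{2}$ expression, I would instead substitute $(xq^{n};q)_{\infty}/(xq^{n}/a;q)_{\infty} = (xq^{n},aq^{1-n}/x;q)_{\infty}/\theta(xq^{n}/a)$, apply the theta shift $\theta(xq^{n}/a) = (-x/a)^{-n}q^{-n(n-1)/2}\theta(x/a)$ and the index-shift identity $(aq^{1-n}/x;q)_{\infty} = (aq/x;q)_{\infty}(-a/x)^{n}q^{-n(n-1)/2}(x/a;q)_{n}$, and collect terms: the $n$-dependent contributions coalesce into exactly $q^{n(n-1)}(xy/a)^{n}/[(x,y;q)_{n}]$, while the global prefactor simplifies to $(a,x,y;q)_{\infty}/[(q)_{\infty}\theta(y)\theta(x/a)]$. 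The first ${}_{1}\psi_{2}$ expression of the lemma then follows from the manifest $x\leftrightarrow y$ symmetry of the ${}_{2}\psi_{2}$ form: swapping $x$ and $y$ in that representation and inverting the procedure used to derive it produces a ${}_{1}\psi_{2}$ in the variable $x$ with parameters $(y/a;0,y)$, and matching the asymmetric theta prefactors (via the identity $(x,aq/x;q)_{\infty}=(x/a;q)_{\infty}\theta(x/a) \cdot (x;q)_{\infty}/(x/a;q)_{\infty}$) closes the argument.

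The principal obstacle is the extensive bookkeeping of $q$-Pochhammer and theta prefactors through the several index shifts, in particular ensuring consistency for both positive and negative indices $n$; the theta transformation $\theta(uq^{k}) = (-u)^{-k}q^{-k(k-1)/2}\theta(u)$ is the workhorse identity used repeatedly to convert uniformly between the three forms. None of the deeper Slater or Bailey transformations recorded above is required, but one must justify the interchange of summations at an intermediate step either by absolute convergence in an appropriate parameter range or by analytic continuation.
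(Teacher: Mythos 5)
Your proposal goes wrong at both of its key computational steps, and the gap is not just a matter of ``justifying an interchange''. For the ${}_2\psi_2$ form: the $q$-binomial expansion $(xq^{n})_{\infty}/(xq^{n}/a)_{\infty}=\sum_{k\ge 0}\frac{(a)_{k}}{(q)_{k}}(xq^{n}/a)^{k}$ is valid only for $|xq^{n}/a|<1$, which fails for every sufficiently negative $n$, and the double sum has no region of absolute convergence (take $k\approx -n$: the exponent of $q$ is $\binom{n}{2}+nk\approx -\tfrac{n^{2}}{2}\to -\infty$, so the terms blow up). Carrying out your interchange formally and using $\theta(yq^{k})=(-y)^{-k}q^{-k(k-1)/2}\theta(y)$ does cancel $\theta(y)$, but what remains is $\sum_{k\ge 0}\frac{(a)_{k}}{(q)_{k}}q^{-k(k-1)/2}\bigl(-\tfrac{x}{ay}\bigr)^{k}$, a divergent unilateral series in which $x$ and $y$ enter only through $x/(ay)$; no reindexing can turn this into $\sum_{n\in\mathbb{Z}}(x/a)_{n}(y/a)_{n}a^{n}$, whose summand contains the product $(x/a)_{n}(y/a)_{n}$ that your computation never produces. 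For the ${}_0\psi_2$ form the situation is worse: the substitutions you list ($(xq^{n})_{\infty}/(xq^{n}/a)_{\infty}=(xq^{n},aq^{1-n}/x)_{\infty}/\theta(xq^{n}/a)$, the theta shift, and the index shift for $(aq^{1-n}/x)_{\infty}$) are all correct identities, but if you actually collect the terms you recover exactly $(-1)^{n}q^{n(n-1)/2}y^{n}(x/a)_{n}/(x)_{n}$ times the original prefactor, i.e.\ the manipulation is a tautology returning you to the defining ${}_1\psi_2$; in particular no factor $(y;q)_{n}$ can appear, so the claimed coalescence into $q^{n(n-1)}(xy/a)^{n}/[(x,y;q)_{n}]$ is false. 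Since the first ${}_1\psi_2$ expression is derived in your plan from the ${}_2\psi_2$ form, it inherits the same gap.

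The underlying point is that passing between the ${}_1\psi_2$, ${}_2\psi_2$ and ${}_0\psi_2$ representations in (\ref{eq:mu 2psi2}) is a genuine bilateral transformation, not a term-by-term rewriting, and your assertion that ``none of the deeper Slater or Bailey transformations is required'' is precisely where the proposal fails. The paper quotes the lemma from [ST, Theorem~1.4] and notes that the three expressions follow from Bailey's ${}_2\psi_2$ transformation formulas (\ref{eq:Bailey trans0})--(\ref{eq:Bailey trans3}) (applied after writing the defining ${}_1\psi_2$ as a suitable limit of a ${}_2\psi_2$); some input of this strength, or a careful argument splitting the bilateral sum and analytically continuing (in effect reproving Bailey's transformation), is unavoidable. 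If you want an elementary route, you should start from one of those Bailey transformations, or prove the needed ${}_2\psi_2$ identity by the standard technique behind it (Ramanujan's ${}_1\psi_1$ summation used on a convergent expansion), rather than the termwise $q$-binomial expansion, which cannot be made convergent for the negative part of the bilateral sum.
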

These bilateral series expressions follow from other types of Bailey's transformation formulas for ${}_{2}\psi_{2}$ \cite[(3.2.3)]{AB}, \cite[(2.3)]{B1}, \cite[Chapter~5, Exercise~5.20]{GR}:\begin{align}
{_{2}\psi _2}\left(\begin{matrix}a,b \\ c,d \end{matrix};q,x\right)
\label{eq:Bailey trans0}
 &=
 \frac{(ax,c/a,d/b,qc/abx)_{\infty}}{(x,c,q/b,cd/abx)_{\infty}}
 {_{2}\psi _2}\left(\begin{matrix}a,abx/c \\ ax,d \end{matrix};q,\frac{c}{a}\right) \\
\label{eq:Bailey trans1}
 &=
 \frac{(bx,d/b,c/a,qd/abx)_{\infty}}{(x,d,q/a,cd/abx)_{\infty}}
 {_{2}\psi _2}\left(\begin{matrix}b,abx/d \\ bx,c \end{matrix};q,\frac{d}{b}\right) \\
\label{eq:Bailey trans2}
 &=
 \frac{(ax,d/a,c/b,qd/abx)_{\infty}}{(x,d,q/b,cd/abx)_{\infty}}
 {_{2}\psi _2}\left(\begin{matrix}a,abx/d \\ ax,c \end{matrix};q,\frac{d}{a}\right) \\
\label{eq:Bailey trans3}
 &=
 \frac{(bx,c/b,d/a,qc/abx)_{\infty}}{(x,c,q/a,cd/abx)_{\infty}}
 {_{2}\psi _2}\left(\begin{matrix}b,abx/c \\ bx,d \end{matrix};q,\frac{c}{b}\right).
\end{align}

\section{Proofs of main results}
First, we prove Theorem~\ref{thm:main theorem1} by taking some appropriate limits in Lemma~\ref{lem:Slater 2psi2 lem} and applying (\ref{eq:mu 2psi2}).

\noindent
{\bf{Proof of Theorem~\ref{thm:main theorem1}}} 
We prove the formula (\ref{eq:mu trans 1}) by taking the limit
$$
a_{1} \to x/a, \quad a_{2} \to y/a, \quad c_{1} \to q/x^{\prime}, \quad c_{2} \to q/y^{\prime}, \quad x \to a
$$
and $b_{1},b_{2} \to 0$ in (\ref{eq:Slater A 2 1}), and by applying (\ref{eq:mu 2psi2}). 

If we take the limit
$$
a_{1} \to x/a, \quad a_{2} \to y/a, \quad c_{1} \to q/x^{\prime}, \quad c_{2} \to qy/a
$$
and $b_{1},b_{2} \to 0$ in (\ref{eq:Slater A 2 2}), we have
\begin{align}
{_{2}\psi _2}\left(\begin{matrix} x/a,y/a \\ 0,0 \end{matrix};q,a\right)
   &=
   x^{\prime }\frac{\theta (x/q)\theta (qy/a)}{\theta (xx^{\prime}/q^{2})\theta (a/yx^{\prime })}
   \frac{(qa/xx^{\prime }, qa/yx^{\prime })_{\infty }}{(qa/x,qa/y)_{\infty }}
   {_{2}\psi _2}\left(\begin{matrix} xx^{\prime }/a,yx^{\prime }/a \\ 0,0 \end{matrix};q,a\right) \nonumber \\
   & \quad +
   \frac{a}{y}x^{\prime }\frac{\theta (xyx^{\prime }/q^{2}a)\theta (q/x^{\prime })}{\theta (xx^{\prime}/q^{2})\theta (yx^{\prime }/a)}
   \frac{(qy/x,q)_{\infty }}{(qa/x,qa/y)_{\infty }} \nonumber \\
\label{eq:limit to q-Bessel}
   & \quad \quad \cdot 
   \lim_{b_{1},b_{2} \to 0}{_{2}\phi _1}\left(\begin{matrix} qy/ab_{1},qy/ab_{2} \\ qy/x \end{matrix};q,\frac{b_{1}b_{2}a}{xy}\right).
\end{align}
The first term on the right side of (\ref{eq:limit to q-Bessel}) is, from (\ref{eq:mu 2psi2}), essentially the generalized $\mu$-function $\mu(xx^{\prime},yx^{\prime};a)$.  
The limit of the second term of the right side in (\ref{eq:limit to q-Bessel}) is equal to 
$$
\lim_{b_{1},b_{2} \to 0}{_{2}\phi _1}\left(\begin{matrix} qy/ab_{1},qy/ab_{2} \\ qy/x \end{matrix};q,\frac{b_{1}b_{2}a}{xy}\right)
   =
   {_{0}\phi _1}\left(\begin{matrix} - \\ qy/x \end{matrix};q,\frac{q^{2}y}{ax}\right).
$$
Thus, we have the translation formula (\ref{eq:mu trans 2}). 


Finally, in (\ref{eq:Slater A 2 3}), by taking the limit
$$
a_{1} \to x/a, \quad a_{2} \to y/a, \quad c_{1} \to qx/a, \quad c_{2} \to qy/a
$$
and a similar calculation, we obtain the conclusion (\ref{eq:mu trans 3}). \qed

\begin{rmk}
{\rm{(1)}} Although the formula (\ref{eq:Slater A 2 1}) has the free parameters $x^{\prime }$ and $y^{\prime }$, the formulas (\ref{eq:Slater A 2 2}) and (\ref{eq:Slater A 2 3}) can not be derived by taking the limit $x^{\prime } \to a/x$ or $y^{\prime } \to a/y$ in (\ref{eq:Slater A 2 1}) directly.\\
{\rm{(2)}} The right side of ${_{0}\phi _1}$ in (\ref{eq:mu trans 2}) or (\ref{eq:mu trans 3}) can be regarded as Jackson's $q$-Bessel function:
\begin{align*}
J_\nu^{(2)}(x;q)
   :=
   \frac{(q^{\nu +1})_{\infty}}{(q)_{\infty}}\left(\frac{x}{2}\right)^{\nu }
   {_{0}\phi _1}\left(\begin{matrix} - \\ q^{\nu +1} \end{matrix};q,-\frac{x^{2}q^{\nu +1}}{4}\right)
   =
   \frac{1}{(q)_{\infty}}\left(\frac{x}{2}\right)^{\nu }
   {_{1}\phi _1}\left(\begin{matrix} -x^{2}/4 \\ 0 \end{matrix};q,q^{\nu +1} \right).
\end{align*}
We remark that the $q$-Bessel function term on the right hand side of (\ref{eq:mu trans 2}) and (\ref{eq:mu trans 3}) is not only a fundamental solution of the $q$-Bessel equation, but also the convergent series solution of the $q$-Hermite-Weber equation (3) exactly (see \cite[Lemma~2.2]{ST}).\\
{\rm{(3)}} The formulas (\ref{eq:mu trans 2}) and (\ref{eq:mu trans 3}) are equivalent to (\ref{eq:general mu trans 1}) and (\ref{eq:general mu trans 2}) respectively. 
This fact can be seen from a simple calculation using some formulas for the theta functions:
$$
\theta (x)=\theta (q/x), \quad 
\vartheta _{11}(u)
   =
   -iq^{\frac{1}{8}}e^{-\pi iu}(q)_{\infty }
   \theta (e^{2\pi i u}) 
$$
and the symmetry of the generalized $\mu$-function $\mu (u,v;\alpha )$ \cite[(1.29)]{ST}:
$$
\mu (x,y;a)
   =
   \mu (u,v;\alpha )
   =
   \frac{\vartheta _{11}(v-\alpha \tau )\vartheta _{11}(u)}{\vartheta _{11}(u-\alpha \tau )\vartheta _{11}(v)}e^{2\pi i\alpha (u-v)}
   \mu (v,u;\alpha )
   =
   \frac{\theta (y/a)\theta (x)}{\theta (x/a)\theta (y)}\left(\frac{x}{y}\right)^{\alpha }
   \mu (y,x;a).
$$
\end{rmk}

\noindent
{\bf{Proof of Theorem~\ref{thm:main theorem2}}} 
By taking the limit
$$
a \to x/a, \quad b \to y/a, \quad c \to 0, \quad d \to 0, \quad x \to a
$$
in (\ref{eq:Slater BC 8 2psi2 2}), we obtain
\begin{align}
\label{eq:generalized mu BC0}
{_{2}\psi _2}\left(\begin{matrix} x/a,y/a \\ 0,0 \end{matrix};q,a\right)
   =
   \frac{(x,y)_{\infty }}{(a,xy/a,q^{2}a/xy)_{\infty }}
   {_{4}\psi _8}\left(\begin{matrix} \sqrt{qxy/a},-\sqrt{qxy/a},x/a,y/a \\ \sqrt{xy/qa},-\sqrt{xy/qa},y,x,0,0,0,0 \end{matrix};q,\frac{x^{2}y^{2}}{qa}\right).
\end{align}
The formulas (\ref{eq:generalized mu BC1}) and (\ref{eq:generalized mu BC2}) follow from (\ref{eq:generalized mu BC0}) and (\ref{eq:mu 2psi2}) immediately. \qed

As one interesting application of (\ref{eq:mu BC2}) and (\ref{eq:mu BC3}), we give the following new $q$-expansions of some elliptic functions. 
\begin{cor}
\label{cor: mu elliptic functions}
The Weierstrass $\wp$ function:
$$
\wp (u)=\wp (u,\tau ):=\frac{1}{u^{2}}+\sum_{\substack{\omega \in \mathbb{Z}+\mathbb{Z}\tau, \\ \omega \not=0}}\left\{\frac{1}{(u-\omega )^{2}}-\frac{1}{\omega ^{2}}\right\}
$$
has the following $q$-expansions:
\begin{align}
\wp{(u)}-\wp{(v)}
   &=
   -\frac{4\pi^{2}x}{q}
   \frac{q+x}{q-x}\frac{(q)_{\infty }^{2}}{\theta (x^{2}/q^{2})}{_{2}\psi _6}\left(\begin{matrix} -x,x/q \\ -x/q,x,0,0,0,0 \end{matrix};q,\frac{x^{4}}{q^{2}}\right) \nonumber \\
\label{eq: mu elliptic functions1}
   & \quad +
   \frac{4\pi^{2}y}{q}
   \frac{q+y}{q-y}\frac{(q)_{\infty }^{2}}{\theta (y^{2}/q^{2})}{_{2}\psi _6}\left(\begin{matrix} -y,y/q \\ -y/q,y,0,0,0,0 \end{matrix};q,\frac{y^{4}}{q^{2}}\right) \\   
   &=
\label{eq: mu elliptic functions2}
   -4\pi^{2}(q)_{\infty }^{2}
   \sum_{n \in \mathbb{Z}}
   \left\{\frac{1+xq^{n}}{1-xq^{n}}\frac{x^{4n+1}q^{2n^{2}}}{\theta(x^{2})}
   -\frac{1+yq^{n}}{1-yq^{n}}\frac{y^{4n+1}q^{2n^{2}}}{\theta(y^{2})}\right\} \\
   &=
   -\frac{4\pi ^{2}x(q)_{\infty }^{2}}{\theta(x^{2})}
   \sum_{n \in \mathbb{Z}}
   \left\{\frac{x^{4n}q^{2n^{2}}}{(1-xq^{n})^{2}}-\frac{x^{-4n}q^{2n^{2}}}{(1-x^{-1}q^{n})^{2}}\right\} \nonumber \\
\label{eq: mu elliptic functions3}
   & \quad +
      \frac{4\pi ^{2}y(q)_{\infty }^{2}}{\theta(y^{2})}
   \sum_{n \in \mathbb{Z}}
   \left\{\frac{y^{4n}q^{2n^{2}}}{(1-yq^{n})^{2}}-\frac{y^{-4n}q^{2n^{2}}}{(1-y^{-1}q^{n})^{2}}\right\}.
\end{align}
Further, for some Jacobi elliptic function (see \cite{BW}):
$$
\frac{1}{\sn{(2Ku,k)}\sn{\big(2K\big(u+\frac{1}{2}\big),k\big)}}=\frac{\dn{(2Ku,k)}}{\sn{(2Ku,k)}\cn{(2Ku,k)}},
$$
we have
\begin{align}
& \frac{1}{2\pi{\rm i}}\frac{2K}{\vartheta _{11}\big(\frac{1}{2}\big)}\frac{\dn{(2Ku,k)}}{\sn{(2Ku,k)}\cn{(2Ku,k)}} \nonumber \\
\label{eq: mu elliptic functions4}
   &=
   -q^{-\frac{9}{8}}\frac{q^{2}+x^{2}}{q^{2}-x^{2}}\frac{x}{(q)_{\infty }\theta (-x^{2}/q^{2})}
   {_{4}\psi _8}\left(\begin{matrix} ix,-ix,x/q,-x/q \\ ix/q,-ix/q,x,-x,0,0,0,0 \end{matrix};q,\frac{x^{4}}{q^{2}}\right) \\ 
\label{eq: mu elliptic functions5}
   &=
   q^{-\frac{1}{8}}\frac{x}{(q)_{\infty }\theta(-x^{2})}
   \sum_{n \in \mathbb{Z}}\frac{1+x^{2}q^{2n}}{1-x^{2}q^{2n}}x^{4n}q^{2n^{2}} \\
\label{eq: mu elliptic functions6}
   &=
   q^{-\frac{1}{8}}\frac{x}{(q)_{\infty }\theta(-x^{2})}
   \sum_{n \in \mathbb{Z}}\left\{\frac{x^{4n}q^{2n^{2}}}{1-x^{2}q^{2n}}-\frac{x^{-4n}q^{2n^{2}}}{1-x^{-2}q^{2n}}\right\}.
\end{align}
Here, $k=k(\tau )$ is the elliptic modulus and $K=K(\tau )$ is the elliptic period:
$$
K:=\frac{\pi }{2}(q)_{\infty }^{2}\big({-}q^{\frac{1}{2}}\big)_{\infty }^{4}=\int_{0}^{1}\frac{{\rm d}t}{\sqrt{\big(1-t^{2}\big)\big(1-k^{2}t^{2}\big)}}.
$$
\end{cor}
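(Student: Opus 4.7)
The plan is to derive all six identities from the expansion formulas of Theorem~\ref{thm:main theorem2} by two specialisations of the free variable $y$: set $y \to x$ to obtain the three Weierstrass identities (\ref{eq: mu elliptic functions1})--(\ref{eq: mu elliptic functions3}), and $y \to -x$ to obtain the three Jacobi identities (\ref{eq: mu elliptic functions4})--(\ref{eq: mu elliptic functions6}). The corollary then reduces to two elliptic identifications: that $\mu(x,x)-\mu(y,y)$ is a constant multiple of $\wp(u)-\wp(v)$, and that $\mu(x,-x)$ is a constant multiple of the Jacobi combination $\dn(2Ku,k)/(\sn(2Ku,k)\cn(2Ku,k))$.

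For the Weierstrass part, first specialise $y\to x$ in the Lambert expansion (\ref{eq:mu BC2}). The crucial simplification
\[
\frac{1-xy q^{2n}}{(1-xq^n)(1-yq^n)}\bigg|_{y=x} \;=\; \frac{1-x^{2}q^{2n}}{(1-xq^n)^{2}} \;=\; \frac{1+xq^n}{1-xq^n}
\]
yields
\[
\mu(x,x) \;=\; iq^{-1/8}\,\frac{x}{(q)_\infty\,\theta(x^{2})}\sum_{n\in\Zz}\frac{1+xq^n}{1-xq^n}\,x^{4n}q^{2n^{2}},
\]
so that (\ref{eq: mu elliptic functions2}) is equivalent to the single proportionality
\[
\wp(u)-\wp(v) \;=\; 4\pi^{2}\,iq^{1/8}\,(q)_\infty^{3}\bigl[\mu(x,x)-\mu(y,y)\bigr]. \qquad (\ast)
\]
The same $y\to x$ limit applied to (\ref{eq:mu BC1}) then gives (\ref{eq: mu elliptic functions1}) after the identical collapse $(1-x^{2}/q^{2})/(1-x/q)^{2}=(1+x/q)/(1-x/q)$ drops the ${}_{4}\psi_{8}$ to a ${}_{2}\psi_{6}$, and applied to (\ref{eq:mu BC3}) it gives the split-sum form (\ref{eq: mu elliptic functions3}).

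I would prove $(\ast)$ by a direct elliptic-function argument. The combination $\mu(x,x)-\mu(y,y)$ is manifestly $u\mapsto u+1$ invariant, and invariance under $u\mapsto u+\tau$ follows by the shift $n\mapsto n-1$ in the Lambert sum together with $\theta(qz)=-z^{-1}\theta(z)$. The a priori poles at $x^{2}\in q^{\Zz}$ off the lattice $\Zz+\Zz\tau$ (at $u\in\tfrac12\Zz+\tfrac12\Zz\tau$) are spurious because the sum $S(x):=\sum_n\frac{1+xq^n}{1-xq^n}x^{4n+1}q^{2n^{2}}$ vanishes at $x=\pm1,\pm q^{1/2}$ by the involutions $n\mapsto -n$ and $n\mapsto -n-1$ applied to the summand. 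What remains are the simultaneous simple zeros of $(1-x)$ and $\theta(x^{2})$ at lattice points, producing double poles with leading Laurent coefficient $1/u^{2}$ after insertion into $(\ast)$, exactly matching $\wp(u)=u^{-2}+O(u^{2})$. The two sides of $(\ast)$ therefore differ by a doubly-periodic entire function of $u$ that is antisymmetric in $(u,v)$, and so vanishes identically.

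For the Jacobi identity, setting $y\to -x$ in (\ref{eq:mu BC2}) and using $\sqrt{xy}=ix$, $xy=-x^{2}$, $x^{2n}y^{2n}=x^{4n}$ gives
\[
\mu(x,-x)\;=\;-q^{-1/8}\,\frac{x}{(q)_\infty\,\theta(-x^{2})}\sum_{n\in\Zz}\frac{1+x^{2}q^{2n}}{1-x^{2}q^{2n}}\,x^{4n}q^{2n^{2}},
\]
so that, granted the identification
\[
\frac{1}{2\pi i}\,\frac{2K}{\vartheta_{11}(\tfrac12)}\,\frac{\dn(2Ku,k)}{\sn(2Ku,k)\cn(2Ku,k)}\;=\;-\mu(x,-x)
\]
(proved by the same period-and-principal-parts comparison as $(\ast)$, since the Jacobi function has simple poles precisely at the zeros of $\theta(-x^{2})$ on the lattice), the formula (\ref{eq: mu elliptic functions5}) follows, and (\ref{eq: mu elliptic functions4}), (\ref{eq: mu elliptic functions6}) follow by applying the same $y\to -x$ specialisation to (\ref{eq:mu BC1}) and (\ref{eq:mu BC3}) respectively. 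The main obstacle is the elliptic-function comparison underlying $(\ast)$ and its Jacobi counterpart; the accompanying $q$-series manipulations are essentially mechanical once those are in place.
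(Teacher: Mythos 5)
Your reduction is essentially the paper's own: your proportionality $(\ast)$ is literally the paper's key fact (\ref{eq:W-M}) with $M(u)=4\pi^{2}iq^{1/8}(q)_{\infty}^{3}\mu(u,u)$, and the Jacobi half rests on the identification $\mu\big(u,u+\tfrac12\big)=-\frac{1}{2\pi i}\frac{2K}{\vartheta_{11}(1/2)}\frac{\dn(2Ku,k)}{\sn(2Ku,k)\cn(2Ku,k)}$, which the paper does not re-prove but simply quotes from [ST, (3.9)]; after these two facts, both you and the paper just specialize Theorem~\ref{thm:main theorem2} at $y=x$ and $y=-x$. The only real difference is that you verify ellipticity and the pole structure directly from the Lambert sums, whereas the paper invokes the known symmetry and periodicity of Zwegers' $\mu$ ($\mu(u,v)=\mu(-u,-v)=\mu(v,u)$, $\mu(u+1,v)=-\mu(u,v)$, $\mu(u+\tau,v+\tau)=\mu(u,v)$) to conclude that $\wp-M$ is an even entire elliptic function, hence constant. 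Two points in your sketch need repair. First, matching only the $u^{-2}$ coefficient at the lattice points does not yet make $\wp-M$ entire: you must also exclude a leftover simple pole, which the paper gets from evenness of $M$ (alternatively, an elliptic function cannot have a single simple pole in a period parallelogram); relatedly, your sum $S(x)$ does not vanish at $x=+1$ (that is precisely the genuine lattice pole) — the spurious points are $x\equiv-1,\pm q^{1/2}$ only. Second, in the Jacobi step the poles of $\dn(2Ku,k)/(\sn(2Ku,k)\cn(2Ku,k))$ and of $\mu(u,u+\tfrac12)$ lie at $u\in\tfrac12\mathbb{Z}+\mathbb{Z}\tau$, i.e.\ they come from the factors $1-x^{2}q^{2n}$ in (\ref{eq: mu elliptic functions5}); the zeros of $\theta(-x^{2})$ sit at $u\in\tfrac14+\tfrac12\mathbb{Z}+\tfrac{\tau}{2}\mathbb{Z}$ and are not poles at all, so your parenthetical justification of the residue comparison has the pole set wrong and must be corrected (or you may simply cite [ST, (3.9)], as the paper does). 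With these fixes your argument goes through and is the paper's proof in a more self-contained form.
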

\begin{proof}
The formulas (\ref{eq: mu elliptic functions4}), (\ref{eq: mu elliptic functions5}), and (\ref{eq: mu elliptic functions6}) follow immediately from (\ref{eq:mu BC1}), (\ref{eq:mu BC2}), and  \cite[(3.9)]{ST}:
$$
\mu{\left(u,u+\frac{1}{2}\right)}
   =
   -\frac{1}{2\pi{\rm i}}\frac{2K}{\vartheta _{11}\big(\frac{1}{2}\big)}\frac{\dn{(2Ku,k)}}{\sn{(2Ku,k)}\cn{(2Ku,k)}}.
$$

To prove (\ref{eq: mu elliptic functions1}), (\ref{eq: mu elliptic functions2}), and (\ref{eq: mu elliptic functions3}), it suffices to prove the following fact:
\begin{align}
\label{eq:W-M}
\wp{(u)}-\wp{(v)}
   =
   M(u)-M(v), \quad M(u):=4\pi^{2}iq^{\frac{1}{8}}(q)_{\infty }^{3}\mu(u,u), \quad u,v \in \mathbb{C}\setminus \mathbb{Z}+\mathbb{Z}\tau .
\end{align}
We recall some fundamental properties of the original $\mu$-function (see \cite[Proposition~1.4]{Zw}, \cite[Proposition~8.2, 8.3]{BFOR}), that is, the symmetry: $\mu(u,v)=\mu(-u,-v)=\mu(v,u)$, and periodicity: $\mu(u+1,v)=\mu(u,v+1)=-\mu(u,v)$, $\mu(u+\tau, v+\tau )=\mu(u,v)$. 
Then the double periodicity of $M(u)$ holds:
$$
M(u+m+n\tau )=M(u) \quad m,n \in \mathbb{Z}.
$$
By the definitions and the even symmetry for $\wp{(u)}$ and $M(u)$, we see that $\wp{(u)}-M(u)$ is an even entire function. 
Hence, $\wp{(u)}-M(u)$ is a constant independent of $u$ and we obtain the above fact (\ref{eq:W-M}). 
\end{proof}
\begin{rmk}
The left hand side of (\ref{eq: mu elliptic functions1}) is written by 
\begin{align}
\label{eq:wp and sigma}
\wp{(u)}-\wp{(v)}
   =
   -\frac{\sigma (u+v)\sigma (u-v)}{\sigma (u)^{2}\sigma (v)^{2}}
\end{align}
where $\sigma$ is the Weierstrass $\sigma$ function:
$$
\sigma (u)
   :=
   u
   \prod_{\omega \not=0 \in \mathbb{Z}+\mathbb{Z}\tau }
   \left(1-\frac{u}{\omega }\right)e^{\frac{u}{\omega }+\frac{u^{2}}{2\omega ^{2}}}.
$$

Bailey \cite{B2} noted that the classical well-known formula (\ref{eq:wp and sigma}) is also derived from the specialization of Bailey's summation (\ref{eq:Bailey's summation}):
\begin{align}
\wp(u)-\wp(v)
   &=
   -4\pi^{2}\frac{(1-xy)(x-y)}{(1-x)^{2}(1-y)^{2}}
   {_{6}\psi _6}\left(\begin{matrix} q\sqrt{xy},-q\sqrt{xy},x,x,y,y \\ \sqrt{xy},-\sqrt{xy},qy,qy,qx,qx \end{matrix};q,q\right) \nonumber \\
   &=
   -4\pi^{2}
   \sum_{n \in \mathbb{Z}}\frac{(x-y)(1-xyq^{2n})q^{n}}{(1-xq^{n})^{2}(1-yq^{n})^{2}} \\
   &=
   -4\pi^{2}
   \sum_{n \in \mathbb{Z}}\left\{\frac{xq^{n}}{(1-xq^{n})^{2}}-\frac{yq^{n}}{(1-yq^{n})^{2}}\right\}.
\end{align}
By comparing our results and Bailey's calculation, we have the following interesting relation for bilateral basic hypergeometric functions:
\begin{align}
& \frac{(1-xy)(x-y)}{(1-x)^{2}(1-y)^{2}}
   {_{6}\psi _6}\left(\begin{matrix} q\sqrt{xy},-q\sqrt{xy},x,x,y,y \\ \sqrt{xy},-\sqrt{xy},qy,qy,qx,qx \end{matrix};q,q\right) \nonumber \\
   &=
   \frac{x}{q}
   \frac{q+x}{q-x}\frac{(q)_{\infty }^{2}}{\theta (x^{2}/q^{2})}{_{2}\psi _6}\left(\begin{matrix} -x,x/q \\ -x/q,x,0,0,0,0 \end{matrix};q,\frac{x^{2}}{q^{2}}\right) \nonumber \\
\label{eq:curious psi rel}
   & \quad -
   \frac{y}{q}
   \frac{q+y}{q-y}\frac{(q)_{\infty }^{2}}{\theta (y^{2}/q^{2})}{_{2}\psi _6}\left(\begin{matrix} -y,y/q \\ -y/q,y,0,0,0,0 \end{matrix};q,\frac{y^{2}}{q^{2}}\right).
\end{align}

Can we derive this curious relation (\ref{eq:curious psi rel}) directly by taking some appropriate limit of Slater's transformation formula (\ref{eq:Slater BC r})? 
\end{rmk}


\appendix
\section{Degenerate Very-Well-Poised bilateral series expressions for Ramanujan's mock theta functions}
We define the theta function $\theta_{q}(x)$ and the degenerate Very-Well-Poised bilateral $q$-hypergeometric series $\mathcal{W}(a;b,c;q)$ by 
\begin{align}
\theta_{q}(x)
   :&=
   (q,-x,-q/x)_{\infty }
   =
   \sum_{n \in \Zz}
   x^{n}q^{\frac{n(n-1)}{2}}, \nonumber \\
\mathcal{W}(a;b,c;q)
   :&=
   \frac{1-a}{(1-a/b)(1-a/c)}
   {_{4}\psi _8}\left( \begin{matrix} \sqrt{a}q,-\sqrt{a}q,b,c \\ \sqrt{a},-\sqrt{a},aq/b,aq/c,0,0,0,0 \end{matrix};q,\frac{a^3q^2}{bc} \right) \\
\label{degenerated VWP}
    &=\sum_{n\in\Zz}\frac{(1-a q^{2n})(b,c;q)_n}{(a/b,a/c)_{n+1}}q^{2n^2}\left(\frac{a^3}{bc}\right)^n.
\end{align}
The relation between $\mu{(x,y;a,q)}$ and $\tW$, or $\mu{(x,y;q)}$ and $\tW$, is as follow:
\begin{align}
\mu{(x,y;a,q)}
   &=
   -iq^{-\frac{1}{8}}
   \left(\frac{x}{y}\right)^{\frac{\alpha}{2}}
   \frac{(x/q,y/q,aq/x,aq/y)_{\infty }}{(q)_{\infty }\theta(y)\theta(x/a)\theta(xy/aq)}
   \mathcal{W}\left(\frac{xy}{aq};\frac{x}{a},\frac{y}{a};q\right), \\
\mu{(x,y;q)}
   &=
   iq^{-\frac{9}{8}}
   \frac{\sqrt{xy}}{(q)_{\infty }\theta(xy/q^{2})}
   \mathcal{W}\left(\frac{xy}{q^{2}};\frac{x}{q},\frac{y}{q};q\right)
   =
   iq^{-\frac{9}{8}}
   \frac{\sqrt{xy}}{\theta_{q}(-xy/q^{2})}
   \mathcal{W}\left(\frac{xy}{q^{2}};\frac{x}{q},\frac{y}{q};q\right).   
\end{align}

In this appendix, we list the expressions by $\tW$ for all of Ramanujan's mock theta functions (see \cite[Appendix~A]{BFOR}, but this reference has an incorrect definition of the third order mock theta function $\rho(q)$). 
\subsection{Order 2 theta functions}

\begin{align}
A(q)
   :&=
   \sum_{n=0}^{\infty}
      \frac{q^{n+1}(-q^2;q^2)_n}{(q;q^2)_{n+1}} \nonumber \\
   &=
   \frac{q^2}{\theta_{q^4}(-q^5)}\tW(q^5;q^3,q^2;q^4) \nonumber \\
   &=
   \frac{1}{\theta_{q^4}(-q^5)}
   \sum_{n\in\Zz}
      \frac{(1-q^{8n+5})q^{8n^2+10n+2}}{(1-q^{4n+3})(1-q^{4n+2})}, \\
B(q)
   :&=
   \sum_{n=0}^{\infty}
      \frac{q^n(-q;q^2)_n}{(q;q^2)_{n+1}} \nonumber \\
   &=
   \frac{q^2}{\theta_{q^4}(-q^6)}\tW(q^6;q^3,q^3;q^4) \nonumber \\
   &=
   \frac{1}{\theta_{q^4}(-q^6)}
   \sum_{n\in\Zz}
      \frac{1+q^{4n+3}}{1-q^{4n+3}}q^{8n^2+12n+2}, \\
\mu(q)
   :&=
   \sum_{n=0}^{\infty}
      \frac{(-1)^nq^{n^2}(q;q^2)_n}{(-q^2,q^2)_n^2} \nonumber \\
   &=
   \frac{4}{\theta_{q^4}(q)}\tW(-q;q,-1;q^4)-\frac{(q^2;q^2)_\infty^8}{(q;q)_\infty^3(q^4;q^4)_\infty^4} \nonumber \\
   &=
   \frac{4}{\theta_{q^4}(q)}
   \sum_{n\in\Zz}
      \frac{(1+q^{8n+1})q^{8n^2+2n}}{(1-q^{4n+1})(1+q^{4n})}
   -\frac{(q^2;q^2)_\infty^8}{(q;q)_\infty^3(q^4;q^4)_\infty^4}.
\end{align}
\subsection{Order 3 mock theta functions}

\begin{align}
f(q)
   :&=
   \sum_{n=0}^{\infty}
      \frac{q^{n^2}}{(-q;q)_n^2} \nonumber \\
   &=
   -\frac{4q}{\theta_{q^3}(q^3)}\tW(-q^3;-q^2,q;q^3)
   +\frac{(q^3;q^3)_\infty^4}{(q;q)_\infty(q^6;q^6)_\infty^2} \nonumber \\
   &=
   -\frac{4}{\theta_{q^3}(q^3)}
   \sum_{n\in\Zz}
      \frac{(1+q^{6n+3})q^{6n^2+6n+1}}{(1-q^{3n+1})(1+q^{3n+2})}
   +\frac{(q^3;q^3)_\infty^4}{(q;q)_\infty(q^6;q^6)_\infty^2}, \\
\phi(q)
   :&=
   \sum_{n=0}^{\infty}
      \frac{q^{n^2}}{(-q^2;q^2)_n} \nonumber \\
   &=
   -\frac{2q^2}{\theta_{-q^3}(-q^5)}\tW(q^5;q^3,q^2;-q^3)
   +2q\frac{(q^6;q^6)_\infty(q^{12};q^{12})_\infty^2}{(q^3;q^3)_\infty(q^4;q^4)_\infty} \nonumber \\
   &=
   -\frac{2}{\theta_{-q^3}(-q^5)}
   \sum_{n\in\Zz}
      \frac{(1-q^{6n+5})q^{6n^2+10n+2}}{(1-(-1)^nq^{3n+2})(1-(-1)^nq^{3n+3})}
   +2q\frac{(q^6;q^6)_\infty(q^{12};q^{12})_\infty^2}{(q^3;q^3)_\infty(q^4;q^4)_\infty}, \\
\psi(q)
   :&=
   \sum_{n=1}^{\infty}
      \frac{q^{n^2}}{(q;q^2)_n} \nonumber \\
   &=
   \frac{q}{\theta_{-q^3}(-q^3)}
   \tW(q^3;q^2,q;-q^3)
   +q\frac{(q^6;q^6)_\infty(q^{12};q^{12})_\infty^2}{(q^3;q^3)_\infty(q^4;q^4)_\infty} \nonumber \\
   &=
   \frac{1}{\theta_{-q^3}(-q^3)}
   \sum_{n\in\Zz}
      \frac{(1-q^{6n+3})q^{6n^2+6n+1}}{(1-(-1)^nq^{3n+1})(1-(-1)^nq^{3n+2})}
   +q\frac{(q^6;q^6)_\infty(q^{12};q^{12})_\infty^2}{(q^3;q^3)_\infty(q^4;q^4)_\infty}, \\
\chi(q)
   :&=
   \sum_{n=0}^{\infty}
      \frac{q^{n^2}(-q;q)_n}{(-q^3;q^3)_n} \nonumber \\
   &=
   -\frac{q}{\theta_{q^3}(q^3)}\tW(-q^3;-q^2,q;q^3)
   +\frac{(q^3;q^3)_\infty^4}{(q;q)_\infty(q^6;q^6)_\infty^2} \nonumber \\
   &=
   -\frac{1}{\theta_{q^3}(q^3)}
   \sum_{n\in\Zz}
      \frac{(1+q^{6n+3})q^{6n^2+6n+1}}{(1-q^{3n+1})(1+q^{3n+2})}
   +\frac{(q^3;q^3)_\infty^4}{(q;q)_\infty(q^6;q^6)_\infty^2}, \\
\omega(q)
   :&=
   \sum_{n=0}^{\infty}
      \frac{q^{2n(n+1)}}{(q;q^2)_{n+1}^2} \nonumber \\
   &=
   \frac{2q}{\theta_{q^6}(-q^5)}\tW(q^5;q^3,q^2;q^6)
   +\frac{(q^6;q^6)_\infty^4}{(q^2;q^2)_\infty(q^3;q^3)_\infty^2} \nonumber \\
   &=
   \frac{2}{\theta_{q^6}(-q^5)}
   \sum_{n\in\Zz}
      \frac{(1-q^{12n+5})q^{12n^2+10n+1}}{(1-q^{6n+2})(1-q^{6n+3})}
   +\frac{(q^6;q^6)_\infty^4}{(q^2;q^2)_\infty(q^3;q^3)_\infty^2}, \\
\nu(q)
   :&=
   \sum_{n=0}^{\infty}
   \frac{q^{n(n+1)}}{(-q;q^2)_{n+1}} \nonumber \\
   &=
   \frac{2q^2}{\theta_{q^{12}}(-q^8)}\tW(q^8;-q^5,-q^3;q^{12})
   +\frac{(q;q)_\infty(q^3;q^3)_\infty(q^{12};q^{12})_\infty}{(q^2;q^2)_\infty(q^6;q^6)_\infty} \nonumber \\
   &=
   \frac{2}{\theta_{q^{12}}(-q^8)}
   \sum_{n\in\Zz}
      \frac{(1-q^{24n+8})q^{24n^2+16n+2}}{(1+q^{12n+5})(1+q^{12n+3})}
   +\frac{(q;q)_\infty(q^3;q^3)_\infty(q^{12};q^{12})_\infty}{(q^2;q^2)_\infty(q^6;q^6)_\infty}, \\
\rho(q)
   :&=
   \sum_{n=1}^{\infty}
      \frac{q^{2n(n-1)}(q;q^2)_n}{(q^3;q^6)_n} \nonumber \\
   &=
   \frac{1}{\theta_{q^6}(-q^3)}\tW(q^3;-q^2,-q;q^6) \nonumber \\
   &=
   \frac{1}{\theta_{q^6}(-q^3)}
   \sum_{n\in\Zz}
      \frac{(1-q^{12n+3})q^{12n^2+6n}}{(1+q^{6n+1})(1+q^{6n+2})}.
\end{align}

\subsection{Order 5 mock theta functions}
\begin{align}
f_0(q)
   :&=
   \sum_{n=0}^{\infty}
      \frac{q^{n^2}}{(-q;q)_n} \nonumber \\
   &=
   -\frac{2q^4}{\theta_{q^{30}}(-q^{22})}\tW(q^{22};q^{18},q^4;q^{30}) \nonumber \\
   & \quad 
   -\frac{2q^2}{\theta_{q^{30}}(-q^{12})}\tW(q^{12};q^8,q^4;q^{30})
   +\frac{(q^5;q^5)_\infty^3}{\theta_{q^5}(-q)(q^{10};q^{10})_\infty} \nonumber \\
   &=
   -\frac{2}{\theta_{q^{30}}(-q^{22})}
   \sum_{n\in\Zz}
      \frac{(1-q^{60n+22})q^{60n^2+44n+4}}{(1-q^{30n+18})(1-q^{30n+4})} \nonumber \\
   & \quad
   -\frac{2}{\theta_{q^{30}}(-q^{12})}
   \sum_{n\in\Zz}
      \frac{(1-q^{60n+12})q^{60n^2+24n+2}}{(1-q^{30n+8})(1-q^{30n+4})}
   +\frac{(q^5;q^5)_\infty^3}{\theta_{q^5}(-q)(q^{10};q^{10})_\infty}, \\
f_1(q)
   :&=
   \sum_{n=0}^{\infty}
      \frac{q^{n(n+1)}}{(-q;q)_n} \nonumber \\
   &=
   -\frac{2q^7}{\theta_{q^{30}}(-q^{24})}\tW(q^{24};q^{16},q^8;q^{30}) \nonumber \\
   & \quad 
   -\frac{2q^3}{\theta_{q^{30}}(-q^{14})}\tW(q^{14};q^8,q^6;q^{30})
   +\frac{(q^5;q^5)_\infty^3}{\theta_{q^5}(-q^2)(q^{10};q^{10})_\infty} \nonumber \\
   &=
   -\frac{2}{\theta_{q^{30}}(-q^{24})}
   \sum_{n\in\Zz}
      \frac{(1-q^{60n+24})q^{60n^2+48n+7}}{(1-q^{30n+16})(1-q^{30n+8})} \nonumber \\
   & \quad
   -\frac{2q^3}{\theta_{q^{30}}(-q^{14})}
   \sum_{n\in\Zz}
      \frac{(1-q^{60n+14})q^{60n^2+28n+3}}{(1-q^{30n+8})(1-q^{30n+6})}
   +\frac{(q^5;q^5)_\infty^3}{\theta_{q^5}(-q^2)(q^{10};q^{10})_\infty}, \\
F_0(q)
   :&=
   \sum_{n=0}^{\infty}
      \frac{q^{2n^2}}{(q;q^2)_n} \nonumber \\
   &=
   \frac{1}{\theta_{q^{15}}(-q^4)}\tW(q^4;q^3,q;q^{15}) \nonumber \\
   & \quad 
   -\frac{q^4}{\theta_{q^{15}}(-q^{16})}\tW(q^{16};q^{12},q^4;q^{15})
   -\frac{q(q^{10};q^{10})_\infty^3}{(q^5;q^5)_\infty\theta_{q^{10}}(-q^4)} \nonumber \\
   &=
   \frac{1}{\theta_{q^{15}}(-q^4)}
   \sum_{n\in\Zz}
      \frac{(1-q^{30n+4})q^{30n^2+8n}}{(1-q^{15n+3})(1-q^{15n+1})} \nonumber \\
   & \quad
   -\frac{1}{\theta_{q^{15}}(-q^{16})}
   \sum_{n\in\Zz}
      \frac{(1-q^{30n+16})q^{30n^2+32n+4}}{(1-q^{15n+12})(1-q^{15n+4})}
   -\frac{q(q^{10};q^{10})_\infty^3}{(q^5;q^5)_\infty\theta_{q^{10}}(-q^4)}, \\
F_1(q)
   :&=
   \sum_{n=0}^{\infty}
      \frac{q^{2n(n+1)}}{(q;q^2)_{n+1}} \nonumber \\
   &=
   \frac{q}{\theta_{q^{15}}(-q^7)}\tW(q^7;q^4,q^3;q^{15}) \nonumber \\ 
   & \quad 
   +\frac{q^3}{\theta_{q^{15}}(-q^{12})}\tW(q^{12};q^8,q^4;q^{15})
   +\frac{(q^{10};q^{10})_\infty^3}{(q^5;q^5)_\infty\theta_{q^{10}}(-q^2)} \nonumber \\
   &=
   \frac{1}{\theta_{q^{15}}(-q^7)}
   \sum_{n\in\Zz}
      \frac{(1-q^{30n+7})q^{30n^2+14n+1}}{(1-q^{15n+4})(1-q^{15n+3})} \nonumber \\
   & \quad
   +\frac{1}{\theta_{q^{15}}(-q^{12})}
   \sum_{n\in\Zz}
      \frac{(1-q^{30n+12})q^{30n^2+24n+3}}{(1-q^{15n+8})(1-q^{15n+4})}
   +\frac{(q^{10};q^{10})_\infty^3}{(q^5;q^5)_\infty\theta_{q^{10}}(-q^2)}, \\
\phi_0(q)
   :&=
   \sum_{n=0}^{\infty}
      q^{n^2}(-q;q^2)_n \nonumber \\
   &=
   \frac{q^8}{\theta_{-q^{15}}(-q^{20})}\tW(q^{20};q^{11},q^9;-q^{15})
   -\frac{q^9}{\theta_{-q^{15}}(q^{25})}\tW(-q^{25};-q^{16},q^9;-q^{15}) \nonumber \\
   &=
   \frac{1}{\theta_{-q^{15}}(-q^{20})}
   \sum_{n\in\Zz}
      \frac{(1-q^{30n+20})q^{30n^2+40n+8}}{(1-(-1)^nq^{15n+11})(1-(-1)^nq^{15n+9})} \nonumber \\
   & \quad
   -\frac{1}{\theta_{-q^{15}}(q^{25})}
   \sum_{n\in\Zz}
      \frac{(1+q^{30n+25})q^{30n^2+50n+9}}{(1+(-1)^nq^{15n+16})(1-(-1)^nq^{15n+9})}, \\
\phi_1(q)
   :&=
   \sum_{n=0}^{\infty}
      q^{(n+1)^2}(-q;q^2)_n \nonumber \\
   &=
   \frac{q^3}{\theta_{-q^{15}}(-q^{10})}\tW(q^{10};q^7,q^3;-q^{15})
   +\frac{q}{\theta_{-q^{15}}(q^5)}\tW(-q^5;q^3,-q^2;-q^{15}) \nonumber \\
   &=
   \frac{1}{\theta_{-q^{15}}(-q^{10})}
   \sum_{n\in\Zz}
      \frac{(1-q^{30n+10})q^{30n^2+20n+3}}{(1-(-1)^nq^{15n+7})(1-(-1)^nq^{15n+3})} \nonumber \\
   & \quad
   +\frac{1}{\theta_{-q^{15}}(q^5)}\sum_{n\in\Zz}\frac{(1+q^{30n+5})q^{30n^2+10n+1}}{(1-(-1)^nq^{15n+3})(1+(-1)^nq^{15n+2})}, \\
\psi_0(q)
   :&=
   \sum_{n=0}^{\infty}
   q^{\frac{(n+1)(n+2)}{2}}(-q;q)_n \nonumber \\
   &=
   \frac{q^3}{\theta_{q^{30}}(-q^{20})}\tW(q^{20};q^{17},q^3;q^{30})
   +\frac{q}{\theta_{q^{30}}(-q^{10})}\tW(q^{10};q^7,q^3;q^{30}) \nonumber \\
   &=
   \frac{1}{\theta_{q^{30}}(-q^{20})}
   \sum_{n\in\Zz}
      \frac{(1-q^{60n+20})q^{60n^2+40n+3}}{(1-q^{30n+17})(1-q^{30n+3})} \nonumber \\
   & \quad
   +\frac{1}{\theta_{q^{30}}(-q^{10})}
   \sum_{n\in\Zz}
      \frac{(1-q^{60n+10})q^{60n^2+20n+1}}{(1-q^{30n+7})(1-q^{30n+3})}, \\
\psi_1(q)
   :&=
   \sum_{n=0}^{\infty}
      q^\frac{n(n+1)}{2}(-q;q)_n \nonumber \\
   &=
   \frac{1}{\theta_{q^{30}}(-q^{10})}\tW(q^{10};q^9,q;q^{30})
   +\frac{q^6}{\theta_{q^{30}}(-q^{20})}\tW(q^{20};q^{11},q^9;q^{30}) \nonumber \\
   &=
   \frac{1}{\theta_{q^{30}}(-q^{10})}
   \sum_{n\in\Zz}
      \frac{(1-q^{60n+10})q^{60n^2+20n}}{(1-q^{30n+9})(1-q^{30n+1})} \nonumber \\
   & \quad
   +\frac{1}{\theta_{q^{30}}(-q^{20})}
   \sum_{n\in\Zz}
      \frac{(1-q^{60n+20})q^{60n^2+40n+6}}{(1-q^{30n+11})(1-q^{30n+9})}, \\
\chi_0(q)
   :&=
   \sum_{n=0}^{\infty}
      \frac{q^n}{(q^{n+1};q)_n} \nonumber \\
   &=
   2+\frac{3 q^{-6}}{\theta_{q^{15}}(-q^{-5})}\tW(q^{-5};q,q^{-6};q^{15}) \nonumber \\
   & \quad 
   +\frac{3 q^3}{\theta_{q^{15}}(-q^{10})}\tW(q^{10};q^6,q^4;q^{15})
   +2\frac{\theta_{q^5}(-q^2)^3}{(q;q)_\infty^2} \nonumber \\
   &=
   2
   +\frac{3}{\theta_{q^{15}}(-q^{-5})}
   \sum_{n\in\Zz}
      \frac{(1-q^{30n-5})q^{30n^2-10n-6}}{(1-q^{30n+1})(1-q^{30n-6})} \nonumber \\
   & \quad 
   +\frac{3}{\theta_{q^{15}}(-q^{10})}
   \sum_{n\in\Zz}
      \frac{(1-q^{30n+10})q^{30n^2+20n+3}}{(1-q^{15n+6})(1-q^{15n+4})}
   +2\frac{\theta_{q^5}(-q^2)^3}{(q;q)_\infty^2}, \\
\chi_1(q)
   :&=
   \sum_{n=0}^{\infty}
      \frac{q^{n}}{(q^{n+1};q)_{n+1}} \nonumber \\
   &=
   \frac{3q^2}{\theta_{q^{15}}(-q^{10})}\tW(q^{10};q^7,q^3;q^{15}) \nonumber \\
   & \quad 
   +\frac{3}{\theta_{q^{15}}(-q^5)}\tW(q^5;q^3,q^2;q^{15})
   -2\frac{\theta_{q^5}(-q)^3}{(q;q)_\infty^2} \nonumber \\
   &=\frac{3}{\theta_{q^{15}}(-q^{10})}
   \sum_{n\in\Zz}
      \frac{(1-q^{30n+10})q^{30n^2+20n+2}}{(1-q^{15n+7})(1-q^{15n+3})} \nonumber \\
   & \quad 
   +\frac{3}{\theta_{q^{15}}(-q^5)}
   \sum_{n\in\Zz}
      \frac{(1-q^{30n+5})q^{30n^2+10n}}{(1-q^{15n+3})(1-q^{15n+2})}
   -2\frac{\theta_{q^5}(-q)^3}{(q;q)_\infty^2}, \\
\Psi_0(q)
   :&=
   -1
   +\sum_{n=0}^{\infty}
      \frac{q^{5n^2}}{(q;q^5)_{n+1}(q^4;q^5)_n} \nonumber \\
   &=
   \frac{q}{\theta_{q^{15}}(-q^6)}\tW(q^6;q^4,q^2;q^{15})
   +\frac{q^2}{\theta_{q^{15}}(-q^{11})}\tW(q^{11};q^9,q^2;q^{15}) \nonumber \\
   &=
   \frac{1}{\theta_{q^{15}}(-q^6)}
   \sum_{n\in\Zz}
      \frac{(1-q^{30n+6})q^{30n^2+12n+1}}{(1-q^{15n+4})(1-q^{15n+2})} \nonumber \\
   & \quad
   +\frac{1}{\theta_{q^{15}}(-q^{11})}
   \sum_{n\in\Zz}
      \frac{(1-q^{30n+11})q^{30n^2+22n+2}}{(1-q^{15n+9})(1-q^{15n+2})}, \\
\Psi_1(q)
   :&=
   -1
   +\sum_{n=0}^{\infty}
      \frac{q^{5n^2}}{(q^2;q^5)_{n+1}(q^3;q^5)_n} \nonumber \\
   &=
   \frac{q^2}{\theta_{q^{15}}(-q^7)}\tW(q^7;q^4,q^3;q^{15})
   +\frac{q^4}{\theta_{q^{15}}(-q^{12})}\tW(q^{12};q^8,q^4;q^{15}) \nonumber \\
   &=
   \frac{1}{\theta_{q^{15}}(-q^7)}
   \sum_{n\in\Zz}
      \frac{(1-q^{30n+7})q^{30n^2+14n+2}}{(1-q^{15n+4})(1-q^{15n+3})} \nonumber \\
   & \quad
   +\frac{1}{\theta_{q^{15}}(-q^{12})}
   \sum_{n\in\Zz}
      \frac{(1-q^{30n+12})q^{30n^2+24n+4}}{(1-q^{15n+8})(1-q^{15n+4})}.
\end{align}

\subsection{Order 6 mock theta functions}

\begin{align}
\phi(q)
   :&=
   \sum_{n=0}^{\infty}
      \frac{(-1)^nq^{n^2}(q;q^2)_n}{(-q;q)_{2n}} \nonumber \\
   &=
   \frac{2}{\theta_{q^3}(-q)}\tW(q;-q,-1;q^3) \nonumber \\
   &=
   \frac{2}{\theta_{q^3}(-q)}
   \sum_{n\in\Zz}
      \frac{(1-q^{6n+1})q^{6n^2+2n}}{(1+q^{3n+1})(1+q^{3n})}, \\
\psi(q)
   :&=
   \sum_{n=0}^{\infty}
      \frac{(-1)^nq^{(n+1)^2}(q;q^2)_n}{(-q;q)_{2n+1}} \nonumber \\
   &=
   \frac{q}{\theta_{q^3}(-q^2)}\tW(q^2;-q,-q;q^3) \nonumber \\
   &=
   \frac{1}{\theta_{q^3}(-q^2)}
   \sum_{n\in\Zz}\frac{1-q^{3n+1}}{1+q^{3n+1}}q^{6n^2+4n+1}, \\
\rho(q)
   :&=
   \sum_{n=0}^{\infty}
      \frac{q^\frac{n(n+1)}{2}(-q;q)_n}{(q;q^2)_{n+1}} \nonumber \\
   &=
   \frac{1}{\theta_{q^6}(-q^2)}\tW(q^2;q,q;q^6) \nonumber \\
   &=
   \frac{1}{\theta_{q^6}(-q^2)}
   \sum_{n\in\Zz}
      \frac{1+q^{6n+1}}{1-q^{6n+1}}q^{12n^2+4n}, \\
\sigma(q)
   :&=
   \sum_{n=0}^{\infty}
      \frac{q^\frac{(n+1)(n+2)}{2}(-q;q)_n}{(q;q^2)_{n+1}} \nonumber \\
   &=
   \frac{q}{\theta_{q^6}(-q^4)}\tW(q^4;q^3,q;q^6) \nonumber \\
   &=
   \frac{1}{\theta_{q^6}(-q^4)}
   \sum_{n\in\Zz}
      \frac{(1-q^{12n+4})q^{12n^2+8n+1}}{(1-q^{6n+3})(1-q^{6n+1})}, \\
\lambda(q)
   :&=
   \sum_{n=0}^{\infty}
      \frac{(-1)^nq^n(q;q^2)_n}{(-q;q)_n} \nonumber \\
   &=
   \frac{2q}{\theta_{q^6}(-q^4)}\tW(q^4;-q^2,-q^2;q^6)
   +\frac{(q;q)_\infty^3(q^6;q^6)_\infty^2}{(q^2;q^2)_\infty^3(q^3;q^3)_\infty} \nonumber \\
   &=
   \frac{2}{\theta_{q^6}(-q^4)}
   \sum_{n\in\Zz}
      \frac{1-q^{6n+2}}{1+q^{6n+2}}q^{12n^2+8n+1}
   +\frac{(q;q)_\infty^3(q^6;q^6)_\infty^2}{(q^2;q^2)_\infty^3(q^3;q^3)_\infty}, \\
\mu(q)
   :&=
   \frac{1}{2}
   +\frac{1}{2}
   \sum_{n=0}^{\infty}
      \frac{(-1)^nq^{n+1}(1+q^n)(q;q^2)_n}{(-q;q)_{n+1}} \nonumber \\
   &=
   \frac{2}{\theta_{q^6}(-q^2)}\tW(q^2;-q^2,-1;q^6)
   -\frac{1}{2}\frac{(q;q)_\infty^2(q^3;q^3)_\infty^2}{(q^2;q^2)_\infty^2(q^6;q^6)_\infty} \nonumber \\
   &=
   \frac{2}{\theta_{q^6}(-q^2)}
   \sum_{n\in\Zz}
      \frac{(1-q^{12n+2})q^{12n^2+4n}}{(1+q^{6n+2})(1+q^{6n})}
   -\frac{1}{2}\frac{(q;q)_\infty^2(q^3;q^3)_\infty^2}{(q^2;q^2)_\infty^2(q^6;q^6)_\infty}, \\
\gamma(q)
   :&=
   \sum_{n=0}^{\infty}
      \frac{q^{n^2}(q;q)_n}{(q^3;q^3)_n} \nonumber \\
   &=
   \frac{3}{\theta_{q^3}(-q)}\tW(q,;-q,-1;q^3)
   -\frac{\theta_{q^2}(-q)^2}{2\theta_{q^3}(q)} \nonumber \\
   &=
   \frac{3}{\theta_{q^3}(-q)}
   \sum_{n\in\Zz}
      \frac{(1-q^{6n+1})q^{6n^2+2n}}{(1+q^{3n+1})(1+q^{3n})}
   -\frac{\theta_{q^2}(-q)^2}{2\theta_{q^3}(q)}, \\
\phi_-(q)
   :&=
   \sum_{n=1}^{\infty}
      \frac{q^n(-q;q)_{2n-1}}{(q;q^2)_n} \nonumber \\
   &=
   -\frac{q^\frac{1}{2}}{\theta_{q^3}(-q^2)}\tW(q^2;-q^\frac{3}{2},-q^\frac{1}{2};q^3)
   +\frac{q^\frac{1}{2}(q^2;2^2)_\infty^2(q^6;q^6)_\infty^2}{(q;q)_\infty^2(q^3;q^3)_\infty} \nonumber \\
   &=
   -\frac{1}{\theta_{q^3}(-q^2)}
   \sum_{n\in\Zz}
      \frac{(1-q^{6n+2})q^{6n^2+4n+\frac{1}{2}}}{(1+q^{3n+\frac{3}{2}})(1+q^{3n+\frac{1}{2}})}
   +\frac{q^\frac{1}{2}(q^2;2^2)_\infty^2(q^6;q^6)_\infty^2}{(q;q)_\infty^2(q^3;q^3)_\infty}, \\
\psi_-(q)
   :&=
   \sum_{n=1}^{\infty}
      \frac{q^n(-q;q)_{2n-2}}{(q;q^2)_n} \nonumber \\
   &=
   \frac{1}{2}\frac{q}{\theta_{q^3}(-q^2)}\tW(q^2;q,q;q^3)
   +\frac{q}{2}\frac{(q^6;q^6)_\infty^3}{(q;q)_\infty(q^2;q^2)_\infty} \nonumber \\
   &=
   \frac{1}{2\theta_{q^3}(-q^2)}
   \sum_{n\in\Zz}
      \frac{1+q^{3n+1}}{1-q^{3n+1}}q^{6n^2+4n+1}
   +\frac{q}{2}\frac{(q^6;q^6)_\infty^3}{(q;q)_\infty(q^2;q^2)_\infty}.
\end{align}

\subsection{Order 7 mock theta functions}

\begin{align}
\mathcal{F}_0(q)
   :&=
   \sum_{n=0}^{\infty}
      \frac{q^{n^2}}{(q^{n+1};q)_n} \nonumber \\
   &=
   \frac{2q^4}{\theta_{q^{21}}(-q^{14})}\tW(q^{14};q^9,q^5;q^{21}) \nonumber \\
   & \quad 
   -\frac{2q^9}{\theta_{q^{21}}(-q^{28})}\tW(q^{28};q^{19},q^9;q^{21})
   +\frac{(q^3,q^4,q^7;q^7)_\infty}{(q,q^2,q^5,q^6;q^7)_\infty} \nonumber \\
   &=
   \frac{2}{\theta_{q^{21}}(-q^{14})}
   \sum_{n\in\Zz}
      \frac{(1-q^{42n+14})q^{42n^2+28n+4}}{(1-q^{21n+9})(1-q^{21n+5})} \nonumber \\
   &\quad
   -\frac{2}{\theta_{q^{21}}(-q^{28})}
   \sum_{n\in\Zz}
      \frac{(1-q^{42n+28})q^{42n^2+56n+9}}{(1-q^{21n+19})(1-q^{21n+9})}
   +\frac{(q^3,q^4,q^7;q^7)_\infty}{(q,q^2,q^5,q^6;q^7)_\infty}, \\
\mathcal{F}_1(q)
   :&=
   \sum_{n=1}^{\infty}
      \frac{q^{n^2}}{(q^n;q)_n} \nonumber \\
   &=
   \frac{2q^3}{\theta_{q^{21}}(-q^{14})}\tW(q^{14};q^{11},q^3;q^{21}) \nonumber \\
   & \quad 
   +\frac{2q}{\theta_{q^{21}}(-q^7)}\tW(q^7;q^4,q^3;q^{21})
   -\frac{q(q,q^6,q^7;q^7)_\infty}{(q^2,q^3,q^4,q^5;q^7)_\infty} \nonumber \\
   &=
   \frac{2}{\theta_{q^{21}}(-q^{14})}
   \sum_{n\in\Zz}
      \frac{(1-q^{42n+14})q^{42n^2+28n+3}}{(1-q^{21n+11})(1-q^{21n+3})} \nonumber \\
   &\quad
   +\frac{2}{\theta_{q^{21}}(-q^7)}
   \sum_{n\in\Zz}
      \frac{(1-q^{42n+7})q^{42n^2+14n+1}}{(1-q^{21n+4})(1-q^{21n+3})}
   -\frac{q(q,q^6,q^7;q^7)_\infty}{(q^2,q^3,q^4,q^5;q^7)_\infty}, \\
\mathcal{F}_2(q)
   :&=
   \sum_{n=0}^{\infty}
      \frac{q^{n(n+1)}}{(q^{n+1};q)_{n+1}} \nonumber \\
   &=
   \frac{2q^5}{\theta_{q^{21}}(-q^{17})}\tW(q^{17};q^{11},q^6;q^{21}) \nonumber \\
   & \quad 
   +\frac{2q^2}{\theta_{q^{21}}(-q^{10})}\tW(q^{10};q^6,q^4;q^{21})
   +\frac{(q^2,q^5,q^7;q^7)_\infty}{(q,q^3,q^4,q^6;q^7)_\infty} \nonumber \\
   &=
   \frac{2}{\theta_{q^{21}}(-q^{17})}
   \sum_{n\in\Zz}
      \frac{(1-q^{42n+17})q^{42n^2+34n+5}}{(1-q^{21n+11})(1-q^{21n+6})} \nonumber \\
   &\quad
   +\frac{2}{\theta_{q^{21}}(-q^{10})}
   \sum_{n\in\Zz}
      \frac{(1-q^{21n+10})q^{21n^2+20n+2}}{(1-q^{21n+6})(1-q^{21n+4})}
   +\frac{(q^2,q^5,q^7;q^7)_\infty}{(q,q^3,q^4,q^6;q^7)_\infty}.
\end{align}

\subsection{Order 8 mock theta functions}

\begin{align}
S_0(q)
   :&=
   \sum_{n=0}^{\infty}
      \frac{q^{n^2}(-q;q^2)_n}{(-q^2;q^2)_n} \nonumber \\
   &=
   \frac{2}{\theta_{q^8}(q^3)}\tW(-q^3;q^3,-1;q^8)
   +q\frac{(q^2;q^2)_\infty^2(q^8;q^8)_\infty^2\theta_{q^8}(q)}{(q^4;q^4)_\infty^2\theta_{q^8}(-q^3)^2} \nonumber \\
   &=
   \frac{2}{\theta_{q^8}(q^3)}
   \sum_{n\in\Zz}
      \frac{(1+q^{16n+3})q^{16n^2+6n}}{(1-q^{8n+3})(1+q^{8n})}
   +q\frac{(q^2;q^2)_\infty^2(q^8;q^8)_\infty^2\theta_{q^8}(q)}{(q^4;q^4)_\infty^2\theta_{q^8}(-q^3)^2}, \\
S_1(q)
   :&=
   \sum_{n=0}^{\infty}
      \frac{q^{n(n+2)}(-q;q^2)_n}{(-q^2;q^2)_n} \nonumber \\
   &=
   -\frac{2q^{-1}}{\theta_{q^8}(q)}\tW(-q;q,-1;q^8)
   +q^{-1}\frac{(q^2;q^2)_\infty^2(q^8;q^8)_\infty^2\theta_{q^8}(q^3)}{(q^4;q^4)_\infty^2\theta_{q^8}(-q)^2} \nonumber \\
   &=
   -\frac{2}{\theta_{q^8}(q)}
   \sum_{n\in\Zz}
      \frac{(1+q^{16n+1})q^{16n^2+2n-1}}{(1+q^{8n})(1-q^{8n+1})}
   +q^{-1}\frac{(q^2;q^2)_\infty^2(q^8;q^8)_\infty^2\theta_{q^8}(q^3)}{(q^4;q^4)_\infty^2\theta_{q^8}(-q)^2}, \\
T_0(q)
   :&=
   \sum_{n=0}^{\infty}
      \frac{q^{(n+1)(n+2)}(-q^2;q^2)_n}{(-q;q^2)_{n+1}} \nonumber \\
   &=
   \frac{q^2}{\theta_{q^8}(q^7)}\tW(-q^7;-q^5,q^2;q^8) \nonumber \\
   &=
   \frac{1}{\theta_{q^8}(q^7)}
   \sum_{n\in\Zz}
      \frac{(1+q^{16n+7})q^{16n^2+14n+2}}{(1-q^{8n+2})(1+q^{8n+5})}, \\
T_1(q)
   :&=
   \sum_{n=0}^{\infty}
      \frac{q^{n(n+1)}(-q^2;q^2)_n}{(-q;q^2)_{n+1}} \nonumber \\
   &=
   -\frac{q^5}{\theta_{q^8}(q^{13})}\tW(-q^{13};-q^{7},q^6;q^{8}) \nonumber \\
   &=
   \frac{-1}{\theta_{q^8}(q^{13})}
   \sum_{n\in\Zz}
      \frac{(1+q^{16n+13})q^{16n^2+26n+5}}{(1-q^{8n+6})(1+q^{8n+7})}, \\
U_0(q)
   :&=
   \sum_{n=0}^{\infty}
      \frac{q^{n^2}(-q;q^2)_n}{(-q^4;q^4)_n} \nonumber \\
   &=
   \frac{2}{\theta_{q^4}(q)}\tW(-q;q,-1;q^4) \nonumber \\
   &=
   \frac{2}{\theta_{q^4}(q)}
   \sum_{n\in\Zz}
      \frac{(1+q^{8n+1})q^{8n^2+2n}}{(1+q^{4n})(1-q^{4n+1})}, \\
U_1(q)
   :&=
   \sum_{n=0}^{\infty}
      \frac{q^{(n+1)^2}(-q;q^2)_n}{(-q^2;q^4)_{n+1}} \nonumber \\
   &=
   -\frac{q^2}{\theta_{q^4}(q^5)}\tW(-q^5;q^3,-q^2;q^4) \nonumber \\
   &=
   \frac{-1}{\theta_{q^4}(q^5)}
   \sum_{n\in\Zz}
      \frac{(1+q^{8n+5})q^{8n^2+10n+2}}{(1+q^{4n+2})(1-q^{4n+3})}, \\
V_0(q)
   :&=
   -1
   +2\sum_{n=0}^{\infty}
      \frac{q^{n^2}(-q;q^2)_n}{(q;q^2)_n} \nonumber \\
   &=
   \frac{2}{\theta_{q^8}(-q^2)}\tW(q^2;q,q;q^8)
   -\frac{(q^2;q^2)_\infty^3(q^4;q^4)_\infty}{(q;q)_\infty^2(q^8;q^8)_\infty} \nonumber \\
   &=
   \frac{2}{\theta_{q^8}(-q^2)}
   \sum_{n\in\Zz}
      \frac{1+q^{8n+1}}{1-q^{8n+1}}q^{16n^2+4n}
   -\frac{(q^2;q^2)_\infty^3(q^4;q^4)_\infty}{(q;q)_\infty^2(q^8;q^8)_\infty}, \\
V_1(q)
   :&=
   \sum_{n=0}^{\infty}
      \frac{q^{(n+1)^2}(-q;q^2)_n}{(q;q^2)_{n+1}} \nonumber \\
   &=
   \frac{q}{\theta_{q^8}(-q^4)}\tW(q^4;q^3,q;q^8) \nonumber \\
   &=
   \frac{1}{\theta_{q^8}(-q^4)}
   \sum_{n\in\Zz}
      \frac{(1-q^{16n+4})q^{16n^2+8n+1}}{(1-q^{8n+1})(1-q^{8n+3})}.
\end{align}

\subsection{Order 10 mock theta functions}

\begin{align}
\phi(q)
   :&=
   \sum_{n=0}^{\infty}
      \frac{q^\frac{n(n+1)}{2}}{(q;q^2)_{n+1}} \nonumber \\
   &=
   \frac{2q}{\theta_{q^{10}}(-q^5)}\tW(q^5;q^3,q^2;q^{10})
   +\frac{(q^2;q^2)_\infty(q^5;q^5)_\infty(q^{10};q^{10})_\infty^2}{\theta_{q^5}(-q^2)\theta_{q^{10}}(-q^2)^2} \nonumber \\
   &=
   \frac{2}{\theta_{q^{10}}(-q^5)}
   \sum_{n\in\Zz}
      \frac{(1-q^{20n+5})q^{20n^2+10n+1}}{(1-q^{10n+2})(1-q^{10n+3})}
   +\frac{(q^2;q^2)_\infty(q^5;q^5)_\infty(q^{10};q^{10})_\infty^2}{\theta_{q^5}(-q^2)\theta_{q^{10}}(-q^2)^2}, \\
\psi(q)
   :&=
   \sum_{n=0}^{\infty}
      \frac{q^\frac{(n+1)(n+2)}{2}}{(q;q^2)_{n+1}} \nonumber \\
   &=
   \frac{2q}{\theta_{q^{10}}(-q^5)}\tW(q^5;q^4,q;q^{10})
   -\frac{(q^2;q^2)_\infty(q^5;q^5)_\infty(q^{10};q^{10})_\infty^2}{\theta_{q^5}(-q)\theta_{q^{10}}(-q^4)^2} \nonumber \\
   &=
   \frac{2}{\theta_{q^{10}}(-q^5)}
   \sum_{n\in\Zz}
      \frac{(1-q^{20n+5})q^{20n^2+10n+1}}{(1-q^{10n+1})(1-q^{10n+4})}
   -q\frac{(q^2;q^2)_\infty(q^5;q^5)_\infty(q^{10};q^{10})_\infty^2}{\theta_{q^5}(-q)\theta_{q^{10}}(-q^4)^2}, \\
X(q)
   :&=
   \sum_{n=0}^{\infty}
      \frac{(-1)^nq^{n^2}}{(-q;q)_{2n}} \nonumber \\
   &=
   -\frac{2q^{-1}}{\theta_{q^5}(1)}\tW(-1;-q,q^{-1};q^5)
   -\frac{(q^5;q^5)_\infty^2\theta_{q^{10}}(-q^3)}{(q^{10};q^{10})_\infty\theta_{q^5}(-q)} \nonumber \\
   &=
   \frac{-2}{\theta_{q^5}(1)}
   \sum_{n\in\Zz}
      \frac{(1+q^{10n})q^{10n^2-1}}{(1-q^{5n-1})(1+q^{5n+1})}
   -\frac{(q^5;q^5)_\infty^2\theta_{q^{10}}(-q^3)}{(q^{10};q^{10})_\infty\theta_{q^5}(-q)}, \\
\chi(q)
   :&=
   \sum_{n=0}^{\infty}
      \frac{(-1)^nq^{(n+1)^2}}{(-q;q)_{2n+1}} \nonumber \\
   &=
   -\frac{2q^2}{\theta_{q^5}(q^5)}\tW(-q^5;-q^3,q^2;q^5)
   +q\frac{(q^5;q^5)_\infty^2\theta_{q^{10}}(-q)}{(q^{10};q^{10})_\infty\theta_{q^5}(-q^2)} \nonumber \\
   &=
   \frac{-2}{\theta_{q^5}(q^5)}
   \sum_{n\in\Zz}
      \frac{(1+q^{10n+5})q^{10n^2+10n+2}}{(1-q^{5n+2})(1+q^{5n+3})}
   +q\frac{(q^5;q^5)_\infty^2\theta_{q^{10}}(-q)}{(q^{10};q^{10})_\infty\theta_{q^5}(-q^2)}.
\end{align}

\section*{Acknowledgments}
We are grateful to Professor Masatoshi Noumi (Rikkyo University) for his valuable suggestions on the bilateral basic hypergeometric functions and Slater's translation formulas. 
This work was supported by JST CREST Grant Number JP19209317 and JSPS KAKENHI Grant Number 21K13808.


\bibliographystyle{amsplain}

\medskip
\begin{flushleft}
Genki Shibukawa \\
Department of Mathematics \\
Graduate School of Science \\
Kobe University \\
1-1, Rokkodai, Nada-ku, Kobe, 657-8501, JAPAN\\
E-mail: g-shibukawa@math.kobe-u.ac.jp
\end{flushleft}

\medskip 
\begin{flushleft}
Satoshi Tsuchimi \\
Department of Mathematics \\
Graduate School of Science \\
Kobe University \\
1-1, Rokkodai, Nada-ku, Kobe, 657-8501, JAPAN\\
E-mail: 183s014s@stu.kobe-u.ac.jp
\end{flushleft}

\end{document}